\documentclass{amsart}

\usepackage{amsmath, amssymb, amsthm, mathtools}
\usepackage{tikz}
\usetikzlibrary{arrows.meta}
\usepackage{tikz-cd}
\usepackage[hidelinks]{hyperref}
\usepackage{lmodern}
\usepackage{microtype}
\theoremstyle{plain}
\newtheorem{theorem}{Theorem}[section]
\newtheorem{proposition}[theorem]{Proposition}
\newtheorem{lemma}[theorem]{Lemma}
\newtheorem{corollary}[theorem]{Corollary}

\theoremstyle{definition}
\newtheorem{definition}[theorem]{Definition}
\newtheorem{remark}[theorem]{Remark}

\newcommand{\K}{\mathcal{K}}
\newcommand{\Hom}{\mathrm{Hom}}
\newcommand{\Hd}{\mathrm{Hd}}
\newcommand{\llim}{\varprojlim}
\newcommand{\llo}{\mathop{{\textstyle\varprojlim}^{1}}}
\newcommand{\hlim}{\mathrm{h}\text{-}\!\varinjlim}
\newcommand{\ev}{\mathrm{ev}}
\newcommand{\id}{\mathrm{id}}
\DeclareMathOperator{\sh}{sh}
\newcommand{\Ndag}{\mathbb{N}^{\dagger}}
\newcommand{\asto}{\xrightarrow{\;\approx\;}}
\newcommand{\cz}{\overline{\{0\}}}

\newif\ifanon
\anonfalse

\title{A Milnor exact sequence for $E$-theory}
\ifanon\else
\author{Jos\'e R.\ Carri\'on}
\address{Department of Mathematics, Texas Christian University, Fort Worth, TX 76129, USA}
\email{j.carrion@tcu.edu}
\thanks{The author was partially supported by NSF grant DMS-2451675.}
\fi
\subjclass[2020]{19K35, 46L80, 46L85}

\begin{document}

\begin{abstract}
	For separable $C^*$-algebras $A$ and $B$, the $E$-theory group $E(A,B)$ carries a natural, generally non-Hausdorff, second-countable group topology, Hausdorff exactly when the closure of zero $\cz \subseteq E(A,B)$ is trivial.
	The nonzero elements of $\cz$ are the phantom classes, invisible to every continuous Hausdorff-valued invariant.
	We identify $\cz$ as a derived inverse limit over any shape system $(C_n)$ of $SA \otimes \K$, \( \cz \cong \llo [C_n,S^2B \otimes \K], \) giving a natural Milnor sequence
	\[
		0 \to \llo [C_n,S^2B \otimes \K] \to E(A,B) \to \llim [C_n,SB \otimes \K] \to 0
	\]
	for all separable $A$ and $B$, with no UCT or nuclearity hypothesis.
	For nuclear $A$ it agrees with the Willett--Yu controlled-$KK$ Milnor sequence, and when $A$ satisfies the UCT, nuclear or not, its $\varprojlim^1$ term is the pure-extension group $\mathrm{Pext}^1_{\mathbb Z}(K_{*+1}(A),K_*(B))$, as in Schochet's fine-structure description of the Kasparov groups.
\end{abstract}

\maketitle

\section{Introduction}

$E$-theory is a bivariant homology theory built from asymptotic morphisms.
For separable $A$, $B$ it is $E(A,B) = [[SA \otimes \K,SB \otimes \K]]$, a group of asymptotic-homotopy classes of asymptotic morphisms, and it agrees with Kasparov's $KK(A,B)$ when $A$ is nuclear \cite{Connes-Higson90}.
In \cite{Carrion-Schafhauser23} a natural second-countable group topology is placed on $E(A,B)$, the $E$-theoretic analog of Dadarlat's topology on $KK$ \cite{Dadarlat05}, with which it agrees when $A$ is nuclear: a sequence $x_m \to x$ if and only if there is $y \in E(A, C(\Ndag, B))$ with $y(m) = x_m$ and $y(\infty) = x$ (known as Pimsner's condition), where $\Ndag = \mathbb N \cup \{\infty\}$.
The topology is in general not Hausdorff.
It is Hausdorff exactly when the closure of zero $\cz \subseteq E(A,B)$ is trivial, and the Hausdorffized group \( EL(A,B) \coloneqq E(A,B)/\cz \) is the $E$-theoretic counterpart of R{\o}rdam's $KL$ \cite{Rordam95, Dadarlat05}.
For nuclear $A$ one has $EL(A,B) = KL(A,B)$ \cite[Section~4.2]{Carrion-Schafhauser23}.

R{\o}rdam introduced $KL$ for $A$ satisfying the UCT as the quotient of $KK(A,B)$ by the image of the pure extensions, because approximate unitary equivalence, central to the classification of nuclear $C^*$-algebras, detects a class only up to that subgroup \cite{Rordam95}.
That image coincides with the closure of zero when $A$ is nuclear and satisfies the UCT, and $KL(A,B) = KK(A,B)/\cz$ became the general definition \cite{Dadarlat05}.
Thus $\cz$ is the information these approximate comparisons leave unresolved.

The nonzero elements of $\cz$ are the \emph{phantom} classes: $E$-theory classes that are not zero yet are identified with $0$ by every continuous, Hausdorff-valued invariant.
The name is borrowed from homotopy theory, where a phantom map is nontrivial but restricts to a null-homotopic map on every finite subcomplex, and where the phantom maps between two spaces are themselves identified with a $\varprojlim^1$ (cf.\ \cite[Section~12]{Schochet03}).
Theorem~\ref{thm:main}, the main result of this paper, is an $E$-theoretic analog of that description, and Corollary~\ref{cor:phantom} makes the analogy precise: the phantom classes are exactly the classes that vanish on every stage of a shape system of $SA \otimes \K$.

In the $KK$-picture this subgroup has been described as a $\varprojlim^1$ in several settings.
The first description of this kind goes back to work of L.~Brown on $\mathrm{Ext}$ \cite{Brown75, Brown77b, Brown84}.
When $A$ satisfies the UCT, Schochet \cite{Schochet96, Schochet98} constructed from a $KK$-filtration of $A$ (a filtration that exists precisely when the UCT holds) a Milnor sequence for $KK(A,B)$ whose $\varprojlim^1$ term is the pure-extension group $\mathrm{Pext}^1_{\mathbb Z}(K_{*+1}(A), K_*(B))$.
The fine-structure papers \cite{Schochet01, Schochet02} identify $\cz$ with that term.
Willett and Yu \cite{Willett-Yu21} identified $\cz$ with a $\varprojlim^1$ of their controlled $KK$-groups.

We work in $E$-theory instead.
For ($K$-)nuclear $A$ one has $E(A,B) = KK(A,B)$ \cite[Corollary~9]{Connes-Higson90}, but in general the two theories differ, and it is $E$-theory that is half-exact in both variables for all separable $C^*$-algebras.
Half-exactness and countable additivity already give $E$-theory a Milnor sequence of the classical kind: if $A$ is presented as a direct limit $\varinjlim A_n$ of separable $C^*$-algebras, there is a natural exact sequence $0 \to \llo E(A_n, SB) \to E(A,B) \to \llim E(A_n,B) \to 0$ \cite[Proposition~7.2]{GHT00}, computing $E(A,B)$ from the chosen presentation (and making no reference to the topology on $E(A, B)$).
Applied to a shape system $(C_n)$ of $SA \otimes \K$, with second variable $SB$ so that the middle term is again $E(A,B)$, it identifies its $\varprojlim^1$ term with the kernel of the restriction map $E(A,B) \to \llim E(C_n,SB)$, the group of classes whose restrictions vanish in $E(C_n,SB)$.
Membership in $\cz$ asks more: the restrictions must vanish already as asymptotic-homotopy classes of $^*$-homomorphisms $C_n \to SB \otimes \K$, before suspension and stabilization (Corollary~\ref{cor:phantom}).
Working directly with the homotopy classes over a shape system of $SA \otimes \K$, we identify $\cz \subseteq E(A,B)$ as a $\varprojlim^1$ for arbitrary separable $A$ and $B$ (Theorem~\ref{thm:main}).
For non-nuclear $A$ this subgroup has not, to our knowledge, been described before.
The resulting Milnor sequence is consistent with the descriptions above.
For nuclear $A$, where $E(A,B) = KK(A,B)$, it and the Willett--Yu sequence describe the same extension of $KL$ by $\cz$ (Proposition~\ref{prop:compare}).
Whenever $A$ satisfies the UCT, nuclear or not, its $\varprojlim^1$ term is a pure-extension group of the $K$-theory of $A$ and $B$ (Proposition~\ref{prop:uct}).

Willett and Yu's sequence has since proved fruitful: they used it to give a sufficient condition for the UCT \cite{Willett-Yu24}.
The present description points the other way.
For nuclear $A$, the $\llo$ term of Theorem~\ref{thm:main} computes $\overline{\{0\}}_{KK(A,B)}$ from a shape system alone, with no appeal to the universal coefficient theorem.
A nuclear $A$ whose $\llo$ term is not isomorphic to $\mathrm{Pext}^1_{\mathbb Z}(K_{*+1}(A), K_*(B))$, for even one $B$, would therefore be a counterexample to the UCT.
We do not know whether the invariant can be made effective enough to exhibit such an algebra; we record only that it describes the obstruction without presupposing what it might detect in practice.

Recall \cite{Blackadar85} that a \emph{shape system} for a separable $C^*$-algebra $C$ is an inductive system $(C_n, \gamma_n)$ of separable $C^*$-algebras with semiprojective connecting maps and $\varinjlim C_n \cong C$---every separable $C^*$-algebra admits one \cite[Theorem~4.3]{Blackadar85}.
We write $[C,D]$ for the set of homotopy classes of $^*$-homomorphisms $C \to D$ (an abelian group when $D$ is a suspension of a stable algebra; see Lemma~\ref{lem:loops}).
Our main result is the following.

\begin{theorem}\label{thm:main}
	Let $A$ and $B$ be separable $C^*$-algebras and let $(C_n, \gamma_n)$ be a shape system for $SA \otimes \K$.
	Then the closure of zero in $E(A,B)$ is a first derived inverse limit,
	\begin{equation}\label{eq:czlim}
		\cz \cong \llo [C_n,S^2B \otimes \K].
	\end{equation}
	More precisely, there is a natural short exact sequence of abelian groups,
	\[
		0 \longrightarrow \llo [C_n,S^2B \otimes \K]
		\xrightarrow{\ \partial\ }
		E(A,B) \xrightarrow{\ \Theta\ } \llim [C_n,SB \otimes \K] \longrightarrow 0,
	\]
	in which $\Theta$ is the composition of the quotient $E(A,B) \to EL(A,B)$ with an isomorphism $EL(A,B) \cong \llim [C_n,SB \otimes \K]$ (see Proposition~\ref{prop:ELlim}), and the isomorphism~\eqref{eq:czlim} is implemented by $\partial$, regarded as a map onto $\ker\Theta = \cz$.
\end{theorem}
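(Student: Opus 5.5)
The plan is to realize $E(A,B)$ as the group of homotopy classes of $^*$-homomorphisms out of a mapping telescope, and then to apply the classical Milnor $\varprojlim^1$ sequence for telescopes.

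Write $C = SA\otimes\K = \varinjlim C_n$ and $D = SB\otimes\K$. By the shape-theoretic description of asymptotic morphisms (Dadarlat's picture, which is valid because the connecting maps $\gamma_n$ are semiprojective), $[[C,D]]$ is identified with the set of homotopy classes of compatible families. Concretely, an asymptotic morphism $C\to D$ is represented up to asymptotic homotopy by a sequence of $^*$-homomorphisms $\phi_n\colon C_n\to D$ together with homotopies $\phi_{n+1}\circ\gamma_n\simeq\phi_n$. Equivalently, it is a $^*$-homomorphism out of the mapping telescope
\[
T=\{f\in C([0,\infty],\textstyle\prod C_n)\ \text{of telescope form}\},
\]
or a point of the homotopy inverse limit of the spaces $\Hom(C_n,D)$. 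I would first establish
\[
E(A,B)\cong[T,D]\cong\pi_0\bigl(\mathrm{holim}_n \Hom(C_n,D)\bigr).
\]
The argument uses semiprojectivity of the $\gamma_n$ to lift each asymptotic morphism to genuine $^*$-homomorphisms on each $C_n$ (after a reparametrization), and to lift asymptotic homotopies in the same way. I expect this to be the main obstacle. The delicate point is the passage from asymptotic homotopy to honest homotopy of the compatible families, which needs semiprojectivity applied to $C_n\otimes C[0,1]$-type data, or a relative lifting for homotopies. I would try to deduce it from the known $E$-theory results of \cite{Carrion-Schafhauser23} together with Blackadar's homotopy lifting for semiprojective maps.

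Next, the Bousfield--Kan/Milnor sequence for $\pi_0$ of a homotopy limit of a tower of H-spaces (the group structures are supplied by Lemma~\ref{lem:loops}) gives the exact sequence
\[
0\to\llo\pi_1\Hom(C_n,D)\to\pi_0\,\mathrm{holim}\to\llim\pi_0\Hom(C_n,D)\to 0 .
\]
Here $\pi_1\Hom(C_n,D)=[C_n,SD]=[C_n,S^2B\otimes\K]$ and $\pi_0=[C_n,SB\otimes\K]$. The resulting maps are natural, and $\partial$ is the usual gluing of loops into the telescope.

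It remains to identify $\ker\Theta$ with $\cz$. For this I would invoke Proposition~\ref{prop:ELlim}, which says that $EL(A,B)\cong\llim[C_n,SB\otimes\K]$ via restriction. Once the restriction map $E(A,B)\to\llim$ is shown to agree with $\Theta$, it has kernel exactly $\cz$, which gives \eqref{eq:czlim}. If that agreement needs a direct check, it goes as follows:
\begin{itemize}
\item A class whose restrictions vanish on every stage can be approximated by classes that are homotopically trivial on $C_n$ for larger and larger $n$, so it is in the closure of zero, via Pimsner's condition using $C(\Ndag,B)$.
\item Conversely, each restriction map to the discrete group $[C_n,SB\otimes\K]$ is continuous, since semiprojectivity makes nearby homomorphisms homotopic. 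Hence elements of $\cz$ restrict to zero at every stage.
\end{itemize}
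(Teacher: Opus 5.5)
Your argument is correct, but it follows a different route from the paper's. You take Dadarlat's identification of $[[C,D]]$ with $\pi_0$ of the homotopy limit of the mapping spaces $\Hom(C_n,D)$. This is exactly Theorem~\ref{thm:dad}\eqref{dad:bij}, so your ``main obstacle'' is already available and need not be rederived from \cite{Carrion-Schafhauser23} and lifting arguments. You then quote the abstract Milnor sequence for $\pi_0$ of a homotopy limit of a tower. The paper instead proves that sequence by hand for this particular homotopy limit. There, $\partial$ is defined by $\hlim(\underline 0,\underline\eta)$, shift-invariance is the reparametrization $t\mapsto t+1$, and Lemmas~\ref{lem:surj} and~\ref{lem:inj} are explicit versions of the standard exactness arguments, run with strong homotopy morphisms into $C([0,1],D)$ and path algebra in the fundamental groupoid.

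The paper avoids mapping-space homotopy theory because the maps $\gamma_n^*$ between point-norm mapping spaces are not fibrations. Your route escapes that objection only because you use the homotopy limit rather than $\llim\Hom(C_n,D)$, and you should say explicitly why the classical sequence applies. The homotopy limit is the inverse limit of the tower of iterated mapping path spaces. Its bonding maps are Hurewicz fibrations, and its stages are homotopy equivalent to $\Hom(C_n,D)$, compatibly with the structure maps on $\pi_0$ and $\pi_1(\,\cdot\,,0)$. The Milnor sequence for a tower of fibrations uses only path and square lifting, so it holds for these non-CW spaces. You also need two further checks. First, paths in the homotopy limit are exactly Dadarlat's homotopies, by the exponential law. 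Second, the orthogonal sum with fixed isometries commutes strictly with $\gamma_n^*$, so the sequence consists of groups and Dadarlat's bijection is additive.

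Identifying the edge map with $\Theta$ is then immediate from Theorem~\ref{thm:dad}\eqref{dad:rep} and Lemma~\ref{lem:compare-towers}, so your two bullets are unnecessary. The second is imprecise anyway: restriction lands in $\mathrm{H}(C_n,D)$, and nearby homomorphisms become homotopic only after composing with the semiprojective $\gamma_n$. Drop the claim $E(A,B)\cong[T,D]$. $^*$-homomorphisms out of a $C^*$-algebraic telescope are not strong homotopy morphisms, and nothing in your argument uses it.

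As for what each approach buys: your version is shorter and shows that Theorem~\ref{thm:general} is literally the Milnor sequence of Dadarlat's homotopy limit. The paper's version is self-contained at the level of asymptotic morphisms, needs no abstract homotopy theory for badly behaved mapping spaces, and gives explicit formulas for $\partial$ and for its inverse via the loops $\Lambda_n$.
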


The identification made in the right-most term in the exact sequence above and the surjectivity of $\Theta$ are essentially the Hausdorffization theorem of \cite{Carrion-Schafhauser23} (recalled here as Theorem~\ref{thm:cs}\eqref{cs:haus}); see Lemma~\ref{lem:compare-towers}.
The new content is the connecting map $\partial$ out of $\varprojlim^1$ and the resulting identification of the closure of zero.

The proof works entirely at the level of asymptotic morphisms, using Dadarlat's homotopy limit \cite{Dadarlat94} and the Pimsner condition of \cite{Carrion-Schafhauser23} in place of the homotopy theory of the mapping spaces $\Hom(C_n,SB \otimes \K)$: with the point-norm topology the restriction maps between these spaces need not be fibrations, so the classical Milnor sequence for $\pi_0$ of the limit of a tower of fibrations is not directly available.

Theorem~\ref{thm:main} is a special case of a statement about the bifunctor $[[\,\cdot\,,\,\cdot\,]]$ on asymptotic morphisms (Theorem~\ref{thm:general}).
$E$-theory is recovered by taking source $SA \otimes \K$ and target $SB \otimes \K$.

Section~\ref{sec:prelim} recalls the framework of \cite{Carrion-Schafhauser23} and proves the comparison of towers used to pass between homotopy and asymptotic-homotopy classes.
Section~\ref{sec:milnor} constructs the connecting map and proves Theorems~\ref{thm:main} and \ref{thm:general}.
Section~\ref{sec:examples} treats naturality, a Hausdorffness criterion, the dichotomy, and the comparison with the sequences of Willett--Yu and Schochet.

\section{Preliminaries}\label{sec:prelim}

We recall the framework of \cite{Carrion-Schafhauser23}.
We refer the reader to \cite{Connes-Higson90, Blackadar98} for more information on asymptotic morphisms and $E$-theory and to \cite{Blackadar85, Dadarlat94} for semiprojectivity, shape systems, and the homotopy limit.

Throughout, $C$ and $D$ are $C^*$-algebras, with $C$ separable whenever it is the source of an asymptotic morphism.
An \emph{asymptotic morphism} $\phi \colon C \asto D$ is a family of self-adjoint linear maps $(\phi_t)_{t \ge 0}$, point-norm continuous in $t$, with $\|\phi_t(xy) - \phi_t(x)\phi_t(y)\| \to 0$ for all $x, y \in C$.
Two asymptotic morphisms $\phi, \psi \colon C \asto D$ are \emph{equivalent} if $\|\phi_t(x) - \psi_t(x)\| \to 0$ for all $x \in C$, and \emph{asymptotically homotopic} if there is an asymptotic morphism $C \asto C([0,1],D)$ whose evaluations at $0$ and $1$ are equivalent to $\phi$ and $\psi$.
We write $[[C,D]]$ for the set of asymptotic-homotopy classes and, following \cite{Carrion-Schafhauser23}, $\mathrm{H}(C,D) \subseteq [[C,D]]$ for the image of $\Hom(C,D)$ in $[[C,D]]$: genuine $^*$-homomorphisms modulo asymptotic homotopy, carrying the quotient topology from the point-norm topology on $\Hom(C,D)$.
There is no reason for the natural surjection $[C,D] \to \mathrm{H}(C,D)$ from genuine homotopy classes to be injective.
We compare the two more systematically in Lemma~\ref{lem:compare-towers}.
A $^*$-homomorphism $\rho$ with domain $D$ acts by post-composition, $\rho_*[[\phi]] = [[\rho \circ \phi]]$, and a $^*$-homomorphism $\sigma$ with codomain $C$ by pre-composition, $\sigma^*[[\phi]] = [[\phi \circ \sigma]]$.
More generally, asymptotic morphisms compose, up to asymptotic homotopy, making the sets $[[C,D]]$ the morphism sets of a category; see \cite{Connes-Higson90} or \cite[Section~25.3]{Blackadar98}.
A \emph{reparametrization} of $\phi$ is $(\phi_{r(t)})_t$ for a continuous $r \colon \mathbb R_+ \to \mathbb R_+$ with $r(t) \to \infty$.
Every asymptotic morphism is asymptotically homotopic to each of its reparametrizations \cite[25.1.2(h)]{Blackadar98}.
If $\sigma \colon C_0 \to C$ is a \emph{semiprojective} $^*$-homomorphism \cite[Definition~2.10]{Blackadar85} with $C_0$ separable, then $\phi \circ \sigma$ is asymptotically homotopic to a genuine $^*$-homomorphism for every asymptotic morphism $\phi \colon C \asto D$, so that $\sigma^*$ maps $[[C,D]]$ into $\mathrm{H}(C_0,D)$ \cite[Lemma~2.2]{Carrion-Schafhauser23}.

\begin{definition}[{\cite[Definition~2.3]{Carrion-Schafhauser23}}]\label{def:top}
	For $C$ separable, $[[C,D]]$ carries the weakest topology making $\sigma^* \colon [[C,D]] \to \mathrm{H}(C_0,D)$ continuous for every separable $C_0$ and every semiprojective $^*$-homomorphism $\sigma \colon C_0 \to C$.
\end{definition}

Fix a shape system $(C_n,\gamma_n)$ for $C$, with canonical maps $\gamma_{\infty,n} \colon C_n \to C$, and write $\gamma_{m,n} \coloneqq \gamma_{m-1} \circ \cdots \circ \gamma_n \colon C_n \to C_m$ for the composite connecting maps ($m \ge n$, with $\gamma_{n,n} = \id$).
Precomposition with $\gamma_n$ gives structure maps $\gamma_n^*$ on each of the towers $([C_n,D])_n$ and $(\mathrm{H}(C_n,D))_n$.

The following combines Theorem~2.10, Proposition~2.8, Theorem~2.12, Theorem~3.7, and Lemma~2.7 of \cite{Carrion-Schafhauser23}.

\begin{theorem}\label{thm:cs}
	Let $C$ be separable with shape system $(C_n,\gamma_n)$ and let $D$ be a $C^*$-algebra.
	\begin{enumerate}
		\item\label{cs:countable} $[[C,D]]$ is first countable; second countable if $D$
		      is separable.
		\item\label{cs:discrete}
		      For each $n$, the map $\gamma_n^* \colon \mathrm{H}(C_{n+1},D) \to \mathrm{H}(C_n,D)$ factors through a discrete space that is countable when $D$ is separable.
		\item\label{cs:pimsner} {\rm(Pimsner's condition.)}
		      For $x_m, x \in [[C,D]]$, $x_m \to x$ if and only if there is $y \in [[C,C(\Ndag, D)]]$ with $y(m) = x_m$ ($m \in \mathbb N$) and $y(\infty) = x$.
		\item\label{cs:haus} {\rm(Hausdorffization.)}
		      The maps $\gamma_{\infty,n}^*$ induce a homeomorphism $[[C,D]]_{\Hd} \cong \llim(\mathrm{H}(C_n,D), \gamma_n^*)$, where $[[C,D]]_{\Hd}$ is the Kolmogorov (i.e., $T_0$, but in this case, Hausdorff) quotient.
		\item\label{cs:factor}
		      If $\alpha \colon C_0 \to C_1$ is a semiprojective $^*$-homomorphism between separable $C^*$-algebras and $\phi,\psi \colon C_1 \to D$ are $^*$-homomorphisms with $[[\phi]] = [[\psi]]$ in $[[C_1,D]]$, then $\phi \circ \alpha$ and $\psi \circ \alpha$ are homotopic through $^*$-homomorphisms.
	\end{enumerate}
\end{theorem}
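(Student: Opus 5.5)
We will prove the five items in the order \eqref{cs:factor}, \eqref{cs:discrete}, \eqref{cs:haus}, \eqref{cs:countable}, \eqref{cs:pimsner}. Each later item uses the earlier ones, and the basic tool throughout is semiprojectivity applied to the asymptotic algebra
\[
D_\infty = C_b([0,\infty),D)/C_0([0,\infty),D).
\]
This algebra is the inductive limit of the quotients $C_b([T,\infty),D)$ as $T\to\infty$, so a semiprojective map into it lifts to some $C_b([T,\infty),D)$. In other words, after a reparametrization, a semiprojective precomposition turns an asymptotic morphism into a genuine continuous path of $^*$-homomorphisms for $t\ge T$.

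For \eqref{cs:factor}, let $\Phi\colon C_1 \asto C([0,1],D)$ be an asymptotic homotopy whose endpoints are equivalent to $\phi$ and $\psi$. We concatenate $\Phi$ with the constant paths at $\phi$ and $\psi$. In the asymptotic algebra of $C([0,3],D)$ this gives a $^*$-homomorphism $C_1 \to C([0,3],D)_\infty$, and its evaluations at $0$ and $3$ are the constant asymptotic morphisms $\phi$ and $\psi$ exactly, because the endpoint discrepancies tend to $0$. Precompose with $\alpha$ and use semiprojectivity of $\alpha$ to lift to $C_b([T,\infty),C([0,3],D))$. The lift need not match $\phi\alpha$ at the endpoints on the nose. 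We handle this with the usual trick: lift simultaneously relative to the pullback that prescribes the endpoint values, or, more crudely, use the standard consequence of semiprojectivity that two $^*$-homomorphisms out of $C_1$ that are sufficiently close after precomposing with $\alpha$ are homotopic. Either way, for a single large $t$ the slice of the lift gives a homotopy from $\phi\circ\alpha$ to $\psi\circ\alpha$ through $^*$-homomorphisms.

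For \eqref{cs:discrete}, the same ``close implies homotopic'' consequence of semiprojectivity of $\gamma_n$ shows that the map $\Hom(C_{n+1},D) \to [C_n,D]$, $\phi\mapsto[\phi\gamma_n]$, is locally constant in the point-norm topology. By \eqref{cs:factor} it descends to $\mathrm{H}(C_{n+1},D)$. Hence $\gamma_n^*$ factors through the image of $\mathrm{H}(C_{n+1},D)$ in $[C_n,D]$, equipped with the discrete topology. When $D$ is separable, $\Hom(C_{n+1},D)$ is separable, so a locally constant map out of it has countable image.

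For \eqref{cs:haus}, continuity of $\Gamma=(\gamma_{\infty,n}^*)_n\colon[[C,D]]\to\llim\mathrm{H}(C_n,D)$ is immediate from Definition~\ref{def:top}. Conversely, every semiprojective $\sigma\colon C_0\to C$ factors up to homotopy through some $\gamma_{\infty,n}$, again by semiprojectivity applied to $C=\varinjlim C_n$. Hence each $\sigma^*$ factors continuously through $\gamma_{\infty,n}^*$, so the topology on $[[C,D]]$ is the initial topology for $\Gamma$. It follows that the Kolmogorov quotient embeds homeomorphically onto the image of $\Gamma$, and this image is Hausdorff because, by \eqref{cs:discrete}, the limit is an inverse limit of spaces that factor through discrete ones.

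The remaining issue is surjectivity of $\Gamma$. Given a compatible sequence, we choose $^*$-homomorphisms $\phi_n\colon C_n\to D$ and homotopies $h_n$ from $\phi_{n+1}\gamma_n$ to $\phi_n\gamma_{n-1}$, using \eqref{cs:factor} to make these genuine homotopies after one extra precomposition. We then splice them by Dadarlat's homotopy limit, running along $h_n$ for $t\in[n,n+1]$. This produces an asymptotic morphism $C\asto D$ whose restriction to $C_n$ is asymptotically homotopic to $\phi_n$, which gives surjectivity.

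For \eqref{cs:countable}, by \eqref{cs:haus} and \eqref{cs:discrete} the sets $\{z : \gamma_{\infty,n}^*z \text{ and } \gamma_{\infty,n}^*x \text{ have the same image in the discrete factor}\}$, $n\in\mathbb N$, form a countable neighbourhood base at $x$. When $D$ is separable, the countably many discrete classes at each stage give a countable base for the whole space.

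For \eqref{cs:pimsner}, the easy direction is the following. Given $y$, each $\sigma^*y$ is represented by a genuine $^*$-homomorphism into $C(\Ndag,D)$, whose evaluations converge point-norm. Therefore $\sigma^*x_m\to\sigma^*x$ in $\mathrm{H}(C_0,D)$ for every $\sigma$.

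The converse is the step we expect to be the main obstacle. By \eqref{cs:countable} and the neighbourhood base above, there are thresholds $m_1<m_2<\cdots$ such that for $m\ge m_n$ the restrictions of $x_m$ and $x$ to $C_{n+1}$ become homotopic after composing with $\gamma_n$. We would build $y$ as a homotopy-limit asymptotic morphism into $C(\Ndag,D)$. Coordinate $m$ with $m_n\le m<m_{n+1}$ follows a representative of $x_m$ for large $t$, but for $t\le n$ it is deformed, via these homotopies, toward the corresponding stage of the chosen representative of $x$. This gives continuity at $\infty$ uniformly on each stage. The delicate points are choosing the representatives and homotopies coherently so that the whole family is asymptotically multiplicative uniformly in $m$, and checking that the coordinate at $\infty$ is asymptotically equivalent to $x$ rather than only agreeing with it on each finite stage. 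For the latter we would appeal to the surjectivity construction in \eqref{cs:haus}, which already represents $x$ by such a spliced morphism.
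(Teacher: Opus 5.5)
The paper gives no proof of this theorem. It quotes the five items from Carri\'on--Schafhauser (Lemma~2.7, Proposition~2.8, Theorems~2.10, 2.12 and~3.7 there), so your proposal has to stand on its own. Items \eqref{cs:factor}, \eqref{cs:discrete}, \eqref{cs:haus}, \eqref{cs:countable} and the forward half of \eqref{cs:pimsner} do stand, after minor repairs.

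\begin{itemize}
\item In \eqref{cs:factor} only your first option works. You lift into $C_b([0,\infty),C([0,3],D))$ modulo ideals exhausting $C_0([0,\infty),C_0((0,3),D))$. This quotient records the endpoint families exactly, so the lift has endpoints exactly $\phi\alpha$ and $\psi\alpha$. The ``crude'' option fails: the endpoint of a lifted homotopy is a map out of $C_0$ that is merely close to $\phi\alpha$. It is not of the form $\chi\circ\alpha$, so no closeness statement about maps out of $C_1$ applies to it.
\item The homotopy factorization of a semiprojective $\sigma$ through some $\gamma_{\infty,n}$, which you use in \eqref{cs:haus}, needs the same relative device. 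A plain lift only gives approximate factorization.
\item In the surjectivity of $\Gamma$, the homotopies should run from $\phi_n\gamma_{n-1}$ to $\phi_{n+1}\gamma_n\gamma_{n-1}$ on $C_{n-1}$. As written, the domains do not match.
\end{itemize}

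The genuine gap is the converse of Pimsner's condition. It requires $y(m)=x_m$ and $y(\infty)=x$ exactly in $[[C,D]]$.

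Your appeal to the surjectivity construction in \eqref{cs:haus} cannot supply this. That construction only sees $\Gamma(x)\in\llim\mathrm{H}(C_n,D)$ and returns some class with the same image. So it recovers $x$ only modulo $\cz$, which is exactly the phantom ambiguity the condition must resolve. Likewise, ``follow a representative of $x_m$'' presupposes that $x_m$ is already given by stage data.

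The missing ingredient is Dadarlat's representation theorem, Theorem~\ref{thm:dad}\eqref{dad:rep}. With it the construction goes through:

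\begin{itemize}
\item Write $x=[[\hlim(\underline\phi,\underline h)]]$ and $x_m=[[\hlim(\underline\phi^{(m)},\underline h^{(m)})]]$. Then $\phi_{n+1}$ and $\phi^{(m)}_{n+1}$ represent the restrictions of $x$ and $x_m$ to $C_{n+1}$.
\item Your neighbourhood base gives $m_1<m_2<\cdots$ and homotopies $k^{(m)}_n\colon\phi_{n+1}\gamma_n\simeq\phi^{(m)}_{n+1}\gamma_n$ for $m\ge m_n$.
\item For $m_n\le m<m_{n+1}$, let the $m$-th coordinate use $(\phi_j,h_j)$ for $j<n$. At stage $n$ use the map $\phi_n$ with connecting homotopy $h_n\ast k^{(m)}_n$. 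For $j>n$ use $(\phi^{(m)}_j,h^{(m)}_j)$.
\item Use $x_m$'s data for $m<m_1$, and $x$'s data at $\infty$.
\end{itemize}

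At each stage $j$ these data are constant in $m$ once $m\ge m_{j+1}$. Hence they form a strong homotopy morphism into $C(\Ndag,D)$. Its homotopy limit $y$ is automatically an asymptotic morphism, so the uniform asymptotic multiplicativity you flag as delicate is not an issue. Moreover $\ev_m$ commutes with $\hlim$. The $m$-th coordinate has the same tail as $(\underline\phi^{(m)},\underline h^{(m)})$, and the coordinate at $\infty$ is exactly $(\underline\phi,\underline h)$. Therefore $y(m)=x_m$ and $y(\infty)=x$.
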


A \emph{strong homotopy morphism} $(\underline\phi,\underline h) \colon (C_n,\gamma_n) \to D$ (the case of Dadarlat's \emph{strong maps of systems} \cite[Definition~1.5]{Dadarlat94} in which the target system is the constant system $D$) consists of $^*$-homomorphisms $\phi_n \colon C_n \to D$ and homotopies $h_n \colon C_n \to C([0,1],D)$ with $\ev_0 h_n = \phi_n$ and $\ev_1 h_n = \phi_{n+1} \circ \gamma_n$.
Its \emph{homotopy limit} $\hlim(\underline\phi,\underline h)$, an asymptotic morphism $C \asto D$, and the accompanying notion of homotopy between strong homotopy morphisms are due to Dadarlat \cite[Section~2 and Definition~1.5]{Dadarlat94}.
The homotopy limit is recalled in detail in \cite[Section~1]{Carrion-Schafhauser23}.
Briefly, $\hlim(\underline\phi,\underline h)$ concatenates the homotopies over unit intervals: for $a \in C_n$ and $t \in [m,m+1]$ with $m \ge n$,
\begin{equation}\label{eq:telescope}
	\hlim(\underline\phi,\underline h)_t(a) = h_m\big(\gamma_{m,n}(a)\big)(t-m),
\end{equation}
and this determines the asymptotic morphism up to equivalence \cite[Section~2]{Dadarlat94}.
A strong homotopy morphism is precisely a point of the homotopy inverse limit of the tower of mapping spaces $(\Hom(C_n,D))_n$ with the point-norm topology: a point of each space together with a path from it to the image of the next (cf.\ \cite{Bousfield-Kan72}).
Part~\eqref{dad:bij} of the following theorem says that $[[C,D]]$ is $\pi_0$ of this homotopy limit.

\begin{theorem}[Dadarlat]\label{thm:dad}
	Let $C$ be separable with shape system $(C_n,\gamma_n)$ and let $D$ be a separable $C^*$-algebra.
	\begin{enumerate}
		\item\label{dad:rep}
		      Every asymptotic morphism $C \asto D$ is asymptotically homotopic to $\hlim(\underline\phi,\underline h)$ for some strong homotopy morphism $(\underline\phi,\underline h) \colon (C_n,\gamma_n) \to D$ \cite[Corollary~3.15]{Dadarlat94}.
		      In fact, $[[\hlim(\underline\phi,\underline h) \circ \gamma_{\infty,n}]] = [[\phi_n]]$ in $\mathrm{H}(C_n,D)$ for all $n$ \cite[Proposition~1.5]{Carrion-Schafhauser23}.
		\item\label{dad:bij}
		      The homotopy limit induces a bijection from homotopy classes of strong homotopy morphisms $(C_n,\gamma_n) \to D$ onto $[[C,D]]$ \cite[Theorem~3.5]{Dadarlat94}.
		      In particular, two strong homotopy morphisms with asymptotically homotopic homotopy limits are homotopic \cite[Proposition~3.18]{Dadarlat94}.
	\end{enumerate}
\end{theorem}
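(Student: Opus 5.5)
The plan is to prove Theorem~\ref{thm:dad} directly from semiprojectivity of the connecting maps, following Dadarlat's telescope argument. Throughout, an asymptotic morphism $\phi \colon C \asto D$ is viewed as a $^*$-homomorphism $\dot\phi \colon C \to D_\infty \coloneqq C_b([0,\infty),D)/C_0([0,\infty),D)$. Two standard consequences of semiprojectivity of $\gamma_n$ will be used:
\begin{enumerate}
	\item[(a)] Any $^*$-homomorphism $C_{n+1} \to D_\infty$, precomposed with $\gamma_n$, lifts to $C_b([T,\infty),D)$ for some $T$, i.e.\ to a continuous family of genuine $^*$-homomorphisms $C_n \to D$.
	\item[(b)] If two $^*$-homomorphisms $\alpha, \beta \colon C_{n+1} \to D$ are sufficiently close on a suitable finite set, then $\alpha\gamma_n$ and $\beta\gamma_n$ are homotopic by a short homotopy, one staying close to both.
\end{enumerate}
Part (b) follows from (a) by lifting through $C([0,1],D) \to D \oplus D$ relative to an approximate path; it is the same mechanism as Theorem~\ref{thm:cs}\eqref{cs:factor}.

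For part~\eqref{dad:rep}, apply (a) to $\dot\phi\circ\gamma_{\infty,n+1}\circ\gamma_n$. This gives families $\psi^n_t \colon C_n \to D$ of $^*$-homomorphisms, $t \ge T_n$, with $\|\psi^n_t(a) - \phi_t(\gamma_{\infty,n}a)\| \to 0$. Choose $T'_n \ge T_n$ increasing so fast that $\psi^{n+1}_t\circ\gamma_n$ and $\psi^n_t$ are close enough for (b) on finite sets exhausting dense subsets, for all $t \ge T'_{n+1}$. Set $\phi_n = \psi^n_{T'_n}$. Let $h_n$ be the concatenation of two pieces: the path $t \mapsto \psi^n_t$ for $t \in [T'_n,T'_{n+1}]$, followed by the short homotopy from (b) joining $\psi^n_{T'_{n+1}}$ to $\psi^{n+1}_{T'_{n+1}}\circ\gamma_n$. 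By \eqref{eq:telescope}, $\hlim(\underline\phi,\underline h)$ is then, after a reparametrization sending $[m,m+1]$ onto $[T'_m,T'_{m+1}]$, asymptotically equal to $\phi$ on each $\gamma_{\infty,n}(C_n)$. It is therefore equivalent to $\phi$ on a dense set, hence everywhere. Reparametrization is an asymptotic homotopy, which gives the first assertion.

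For the supplementary claim, fix $a \in C_n$ and let $P(a)\colon [n,\infty)\to D$ be the concatenated telescope path. For each $u$, the map $a \mapsto P(a)(u)$ is a genuine $^*$-homomorphism. Hence
\[
H_t(a)(s) = P(a)\big(n+s(t-n)\big)
\]
defines $^*$-homomorphisms $C_n \to C([0,1],D)$ depending continuously on $t$. This $H$ is an asymptotic homotopy from the constant family $\phi_n$ to $\hlim(\underline\phi,\underline h)\circ\gamma_{\infty,n}$.

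For part~\eqref{dad:bij}, well-definedness is straightforward. A homotopy of strong homotopy morphisms is a strong homotopy morphism into $C([0,1],D)$, and $\hlim$ commutes with evaluation. Its homotopy limit is therefore an asymptotic homotopy. Surjectivity is part~\eqref{dad:rep}. Injectivity is the main obstacle. Given an asymptotic homotopy $\Phi\colon C\asto C([0,1],D)$ between $\hlim(\underline\phi,\underline h)$ and $\hlim(\underline\phi',\underline h')$, I would run the construction of part~\eqref{dad:rep} for $\Phi$. This produces a strong homotopy morphism into $C([0,1],D)$ whose endpoint evaluations are strong homotopy morphisms that represent the two homotopy limits.

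It then remains to prove a relative uniqueness statement: the representative built in part~\eqref{dad:rep} from $\hlim(\underline\phi,\underline h)$ is homotopic, as a strong homotopy morphism, to $(\underline\phi,\underline h)$ itself. For this I would take the lifts $\psi^n_t$ to be the telescope itself, namely $\psi^n_t(a)=P(a)(t)$ as above. This is legitimate because lifts in (a) are unique up to short homotopy after one further precomposition with $\gamma_n$, again by (b). The constructed morphism is then a subsequence/reparametrization of $(\underline\phi,\underline h)$ with extra short homotopies inserted. Sliding the parameter, as in the formula for $H$, gives the required homotopy of strong homotopy morphisms. The bookkeeping of the indices shifted by one level (each use of (b) costs one $\gamma_n$) is where care is needed. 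The final statement of part~\eqref{dad:bij} is a restatement of injectivity.
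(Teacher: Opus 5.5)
The paper gives no proof of this theorem: it quotes Corollary~3.15, Theorem~3.5 and Proposition~3.18 of \cite{Dadarlat94}, and Proposition~1.5 of \cite{Carrion-Schafhauser23}. Your strategy is the one behind those results: lift through $C_b([T,\infty),D)$ using semiprojectivity, then concatenate. Several parts of your sketch are sound: the representation statement, the sliding homotopy $H_t$ giving $[[\hlim(\underline\phi,\underline h)\circ\gamma_{\infty,n}]]=[[\phi_n]]$, well-definedness, and surjectivity. One repair is needed. Your (b) cannot compare $\psi^{n+1}_t\circ\gamma_n$ with $\psi^n_t$, because $\psi^n_t$ need not factor through $\gamma_n$. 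Use $\chi^n_t \coloneqq \psi^{n+1}_t\circ\gamma_n$ as the level-$n$ family instead. Then $\chi^n_t=\alpha\gamma_n$ and $\chi^{n+1}_t\circ\gamma_n=\beta\gamma_n$, with $\alpha=\psi^{n+1}_t$ and $\beta=\psi^{n+2}_t\circ\gamma_{n+1}$ asymptotically close, as (b) requires.

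The genuine gap is in injectivity. Consider the endpoint $\ev_0$ of the strong homotopy morphism you build from $\Phi$. Its lifts $\ev_0\Psi^n_t$, its times $T'_n$ and its short connecting homotopies are all dictated by $\Phi$. So you are not free to \emph{choose} its lifts to be the telescope. You must \emph{prove} that the output of your part-(1) construction is independent of all its choices, up to homotopy of strong homotopy morphisms. Pointwise uniqueness of lifts does not give this, for two reasons.

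First, a homotopy of strong homotopy morphisms needs a filled square at every level. You therefore need comparison homotopies between the two lift families that depend continuously on $t$ across $[T'_n,T'_{n+1}]$. These can be obtained by applying (b) to the families as homomorphisms into $C_b([T,\infty),D)$, for $T$ large.

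Second, you must fill the small squares at the junctions $t=T'_{n+1}$, bounded by the two constructions' connecting homotopies and the comparison homotopies. For telescope lifts those connecting homotopies become short \emph{loops}, and sliding the parameter does not remove them. Killing them needs a further use of semiprojectivity. A loop uniformly close to a constant loop, precomposed with one more connecting map, is freely null-homotopic by (b) with target $C(\mathbb T,D)$, and hence based null-homotopic. You also need the reindexing homotopy $(\phi_{n+1}\gamma_n,h_{n+1}\gamma_n)_n\simeq(\phi_n,h_n)_n$, with paths $h_n$, to absorb the lost level.

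This choice-independence is exactly the content of Dadarlat's Proposition~3.18. As written, your injectivity argument postpones the essential step rather than proving it.
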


We will often identify continuous maps of compact metric spaces into mapping spaces with $^*$-homomorphisms: for a compact metric space $X$, $^*$-homomorphisms $C \to C(X,D)$ correspond to point-norm continuous maps $X \to \Hom(C,D)$.
In particular, writing $SD = C_0((0,1),D)$, a $^*$-homomorphism $C \to SD$ is a based loop at $0$ in $\Hom(C,D)$, a $^*$-homomorphism $C \to C([0,1],SD)$ is a based homotopy of such loops, and a $^*$-homomorphism $C \to C([0,1]^2,D)$ is a square in $\Hom(C,D)$ with prescribed boundary.

When $D$ is stable there are isometries $v_1, v_2$ in the multiplier algebra of $D$ with $v_1 v_1^* + v_2 v_2^* = 1$, and the \emph{orthogonal sum} of $\phi,\psi \in \Hom(C,D)$ is $\phi \oplus \psi \coloneqq v_1 \phi(\,\cdot\,) v_1^* + v_2 \psi(\,\cdot\,) v_2^*$.
Up to homotopy it does not depend on the choice of $v_1$, $v_2$.

\begin{lemma}[cf.\ {\cite[Proposition~25.4.3]{Blackadar98}}]
	\label{lem:loops}
	Let $C$ be separable and let $D$ be a stable $C^*$-algebra.
	Then $[C,SD] = \pi_1(\Hom(C,D),0)$, the loop-concatenation sum agrees with the orthogonal sum, and $[C,SD]$ is an abelian group.
	Concatenation in the suspension coordinate likewise agrees with the orthogonal sum on $[[C,SD]]$, which is an abelian group as well.
\end{lemma}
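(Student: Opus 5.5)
The plan is to identify both sides concretely and then run an Eckmann--Hilton argument, first for genuine homotopy classes and then verbatim for asymptotic ones.

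\emph{Step 1: $[C,SD]=\pi_1$.} Write $SD = C_0((0,1),D)$. As recalled above, a $^*$-homomorphism $C \to SD$ is the same thing as a point-norm continuous map $[0,1] \to \Hom(C,D)$ sending both endpoints to $0$, that is, a based loop at $0$. A homotopy $C \to C([0,1],SD)$ is likewise a based homotopy of such loops. Hence $[C,SD] = \pi_1(\Hom(C,D),0)$ as sets. The loop concatenation on the left side is implemented by a genuine $^*$-homomorphism $\mu\colon SD \oplus SD \to SD$, namely reparametrizing the two halves of $(0,1)$ and gluing at $1/2$, where both loops vanish. So $\phi * \psi = \mu \circ (\phi,\psi)$.

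\emph{Step 2: the orthogonal sum.} Because $D$ is stable, we have the operation $\phi \oplus \psi = v_1\phi v_1^* + v_2 \psi v_2^*$. This is computed pointwise in the suspension coordinate, since $v_i \in M(D)$ acts on $SD$ coordinatewise. Consequently $\oplus$ preserves based loops and based homotopies, and it descends to $[C,SD]$.

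The main technical input is that the class $[0]$ is a two-sided unit for $\oplus$. Concretely, $\phi \oplus 0 = v_1 \phi v_1^*$ must be homotopic to $\phi$. For this I would use the standard fact that, for stable $D$, the isometry $v_1 \in M(D)$ is connected to $1$ by a path of isometries that is continuous in the strict topology. Conjugating $\phi$ along this path gives a point-norm continuous homotopy of $^*$-homomorphisms $C \to SD$, because $C$ is separable and $\phi(C) \subseteq SD$. The same kind of path of isometries gives independence of the choice of $v_1, v_2$. I expect this step, namely producing the strictly continuous path and checking point-norm continuity on $SD$, to be the only real obstacle; I would cite \cite[Proposition~25.4.3]{Blackadar98} or Cuntz's argument for it.

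\emph{Step 3: interchange and Eckmann--Hilton.} The interchange law
\[
(\phi\oplus\psi) * (\phi'\oplus\psi') = (\phi*\phi')\oplus(\psi*\psi')
\]
holds on the nose. Indeed, $\oplus$ is pointwise in the loop coordinate, while $*$ only reparametrizes that coordinate. The constant loop $0$ is a unit for $*$ up to homotopy, as in ordinary $\pi_1$, and it is a unit for $\oplus$ by Step 2. Thus both operations are well defined on $[C,SD]$, share the unit $[0]$, and satisfy interchange. The Eckmann--Hilton argument then gives that the two operations agree and are commutative. Inverses come from $*$ (loop reversal), so $[C,SD]$ is an abelian group.

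\emph{Step 4: the asymptotic version.} For $[[C,SD]]$ I repeat Steps 2 and 3 with asymptotic morphisms. The concatenation $\mu\circ(\phi,\psi)$ is the postcomposition of an asymptotic morphism with a $^*$-homomorphism, so it is again an asymptotic morphism. It respects equivalence and asymptotic homotopy, and so does $\oplus$, since it is conjugation by fixed multipliers. Interchange again holds exactly, for each $t$. The unit homotopies of Steps 1 and 2 are obtained by postcomposing with the same $^*$-homomorphism homotopies as before, so they yield asymptotic homotopies. Inverses come from reversal of the suspension coordinate. Eckmann--Hilton then gives that concatenation equals $\oplus$ on $[[C,SD]]$ and that this is an abelian group.
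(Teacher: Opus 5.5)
Your proposal is correct and follows essentially the same route as the paper. Both arguments make the loop sum and the orthogonal sum unital with common unit $0$, using a strictly continuous path of isometries from $1$ to $v_i$. Both check the interchange law on the nose and invoke Eckmann--Hilton, with inverses from loop reversal.

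The only difference is on $[[C,SD]]$. You rerun the argument by post-composing asymptotic morphisms with the $^*$-homomorphism homotopies, whereas the paper simply cites \cite[25.4.1 and Proposition~25.4.3]{Blackadar98}. Your version works.
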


\begin{proof}
	The identification $[C,SD] = \pi_1(\Hom(C,D),0)$ is a special case of the correspondence above.
	Both the loop sum $\ast$ and the orthogonal sum $\oplus$ are unital binary operations on $[C,SD]$ with common unit $0$, and they satisfy the interchange law $(a \oplus b) \ast (c \oplus d) = (a \ast c) \oplus (b \ast d)$: at suspension parameter $s \le \tfrac12$ both sides are $v_1 a (2s) v_1^* + v_2 b (2s) v_2^*$, and at $s \ge \tfrac12$ both are $v_1 c (2s-1) v_1^* + v_2 d (2s-1) v_2^*$.
	Moreover $a \oplus 0 \simeq a$ and $0 \oplus b \simeq b$ through loops.
	To see this, choose a strictly continuous path of isometries from $1$ to $v_i$ in the multiplier algebra, which exists since $D$ is stable (cf.\ \cite[25.4.1]{Blackadar98}).
	Conjugating along it gives such a homotopy.
	Hence
	\[
		a \ast b = (a \oplus 0) \ast (0 \oplus b) = (a \ast 0) \oplus (0 \ast b) = a \oplus b,
	\]
	so the two sums coincide.
	Commutativity follows in the same way, and inverses are given by loop reversal.
	The statements for asymptotic morphisms are \cite[25.4.1 and Proposition~25.4.3]{Blackadar98}.
\end{proof}

Recall that a \emph{tower} of abelian groups is an inverse sequence $G_1 \xleftarrow{\ g_1\ } G_2 \xleftarrow{\ g_2\ } \cdots$ with structure maps $g_n \colon G_{n+1} \to G_n$, and that, writing $\sh\big((x_n)_n\big) = \big(g_n(x_{n+1})\big)_n$ for the shift on $\prod_n G_n$,
\[
	\llim G_n = \ker(1-\sh)
	\qquad\text{and}\qquad
	\llo G_n = \operatorname{coker}(1-\sh);
\]
see e.g.\ \cite[Section~3]{Schochet03}.
The next lemma, which compares towers that interleave, is folklore---we include the short proof since we could not locate a reference.

\begin{lemma}\label{lem:interleave}
	Let $(G_n,g_n)$ and $(K_n,k_n)$ be towers of abelian groups and suppose there are homomorphisms $q_n \colon G_n \to K_n$ and $r_n \colon K_{n+1} \to G_n$ with
	\[
		r_n \circ q_{n+1} = g_n
		\qquad\text{and}\qquad
		q_n \circ r_n = k_n
	\]
	for all $n$.
	Then $(q_n)_n$ and $(r_n)_n$ induce mutually inverse isomorphisms $\llim G_n \cong \llim K_n$ and $\llo G_n \cong \llo K_n$.
\end{lemma}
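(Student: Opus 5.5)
The plan is to realize both isomorphisms through the shift description $\llim = \ker(1-\sh)$ and $\llo = \operatorname{coker}(1-\sh)$. For this I would assemble the level maps into maps of products,
\[
	Q \colon \prod_n G_n \to \prod_n K_n, \quad Q\big((x_n)_n\big) = \big(q_n(x_n)\big)_n,
	\qquad
	R \colon \prod_n K_n \to \prod_n G_n, \quad R\big((y_n)_n\big) = \big(r_n(y_{n+1})\big)_n .
\]
The first thing to check is that both maps commute with the shifts, so that they pass to $\ker(1-\sh)$ and $\operatorname{coker}(1-\sh)$.

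For $Q$, I need $q_n \circ g_n = k_n \circ q_{n+1}$. Writing $g_n = r_n q_{n+1}$ and $k_n = q_n r_n$, both sides become $q_n r_n q_{n+1}$. For $R$, I need $r_n \circ k_{n+1} = g_n \circ r_{n+1}$. Substituting $k_{n+1} = q_{n+1} r_{n+1}$ and $g_n = r_n q_{n+1}$, both sides become $r_n q_{n+1} r_{n+1}$. Hence $Q\,\sh = \sh\,Q$ and $R\,\sh = \sh\,R$, so both maps commute with $1-\sh$ and induce homomorphisms between the $\llim$ groups and between the $\llo$ groups.

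Next I compute the composites. We have $RQ\big((x_n)\big) = \big(r_n q_{n+1}(x_{n+1})\big) = \big(g_n(x_{n+1})\big) = \sh\big((x_n)\big)$. Likewise $QR\big((y_n)\big) = \big(q_n r_n(y_{n+1})\big) = \sh\big((y_n)\big)$. So both composites equal the shift on the respective product. It therefore suffices to show that $\sh$ induces the identity on $\ker(1-\sh)$ and on $\operatorname{coker}(1-\sh)$.

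On the kernel this is immediate, since $\sh x = x$ there. On the cokernel, $\sh x = x - (1-\sh)x$ differs from $x$ by an element of the image of $1-\sh$, so $\sh$ induces the identity there as well. Consequently the maps induced by $Q$ and $R$ are mutually inverse on $\llim$ and on $\llo$.

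There is no real obstacle here. The only point needing care is the index bookkeeping in the definition of $R$, namely the shift by one in $y_{n+1}$, which is exactly what makes $RQ$ and $QR$ equal to the shift rather than to the identity.
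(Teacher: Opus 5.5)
Your proof is correct, but it takes a different route from the paper's. The paper splices the two towers into the alternating tower $G_1 \xleftarrow{\,r_1\,} K_2 \xleftarrow{\,q_2\,} G_2 \xleftarrow{\,r_2\,} K_3 \xleftarrow{\,q_3\,} \cdots$. It observes that $(G_n,g_n)$ and $(K_{n+1},k_{n+1})$ are both cofinal subsequences of this tower. It then invokes the invariance of $\llim$ and $\llo$ under passage to cofinal subsequences, citing Schochet's Pext primer.

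You instead work directly with the presentations $\ker(1-\sh)$ and $\operatorname{coker}(1-\sh)$. You check that $Q$ and $R$ intertwine the shifts, compute $RQ=\sh$ and $QR=\sh$, and note that $\sh$ induces the identity on both $\ker(1-\sh)$ and $\operatorname{coker}(1-\sh)$.

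The paper's version is shorter and presents the lemma as an instance of a standard general fact. However, it leaves some bookkeeping implicit: that the composite of the cofinality isomorphisms (including the reindexing from $K_{n+1}$ to $K_n$) is exactly the map induced by $(q_n)_n$, respectively $(r_n)_n$, and that these are mutually inverse.

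Your argument is self-contained and makes the ``mutually inverse'' claim fully explicit. It is essentially the shift trick by which cofinality invariance itself is usually proved, specialized to this situation.
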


\begin{proof}
	The hypotheses say precisely that the alternating tower
	\[
		G_1 \xleftarrow{\ r_1\ } K_2 \xleftarrow{\ q_2\ } G_2 \xleftarrow{\ r_2\ } K_3 \xleftarrow{\ q_3\ } G_3 \xleftarrow{\ r_3\ } \cdots
	\]
	contains both $(G_n,g_n)$ and $(K_{n+1},k_{n+1})$ as cofinal subsequences: consecutive maps compose to $r_n \circ q_{n+1} = g_n$ and $q_{n+1} \circ r_{n+1} = k_{n+1}$.
	Since $\llim$ and $\llo$ are unchanged by passage to cofinal subsequences \cite[Proposition~3.3]{Schochet03}, both towers compute the $\llim$ and $\llo$ of the alternating tower, via the maps induced by $(q_n)_n$ and $(r_n)_n$.
\end{proof}

The isomorphism of Theorem~\ref{thm:cs}\eqref{cs:haus} is stated in terms of $\mathrm{H}(C_n,D)$, the $^*$-homomorphisms modulo asymptotic homotopy, yet the constructions of Section~\ref{sec:milnor} take place in the genuine homotopy classes $[C_n,D]$.
The two need not agree stage by stage (asymptotically homotopic $^*$-homomorphisms out of $C_n$ need not be homotopic, since $C_n$ itself is not assumed semiprojective), but the next lemma shows that the comparison of consecutive stages furnished by Theorem~\ref{thm:cs}\eqref{cs:factor} makes them agree after passing to limits, which is all that the inverse limit and its first derived functor see.

\begin{lemma}\label{lem:compare-towers}
	Let $(C_n,\gamma_n)$ be a shape system for the separable algebra $C$, and let $D$ be the suspension of a stable $C^*$-algebra, so that $[C_n,D]$ and its quotient $\mathrm{H}(C_n,D)$ are abelian groups (Lemma~\ref{lem:loops}).
	The natural surjections $q_n \colon [C_n,D] \to \mathrm{H}(C_n,D)$ and the maps
	\[
		r_n \colon \mathrm{H}(C_{n+1},D) \longrightarrow [C_n,D],\qquad
		[[\phi]]\longmapsto[\phi \circ \gamma_n],
	\]
	satisfy the hypotheses of Lemma~\ref{lem:interleave} for the towers with structure maps $\gamma_n^*$.
	Consequently,
	\[
		\llim[C_n,D] \cong \llim\mathrm{H}(C_n,D)\qquad\text{and}\qquad
		\llo[C_n,D] \cong \llo\mathrm{H}(C_n,D).
	\]
\end{lemma}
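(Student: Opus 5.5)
We verify the hypotheses of Lemma~\ref{lem:interleave} for $G_n = [C_n,D]$, $K_n = \mathrm{H}(C_n,D)$, with structure maps $g_n = k_n = \gamma_n^*$. The conclusion then follows directly from that lemma.

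The first step is to show that $r_n$ is well defined and a homomorphism. An element of $\mathrm{H}(C_{n+1},D)$ is represented by a genuine $^*$-homomorphism $\phi \colon C_{n+1} \to D$. If $\phi$ and $\psi$ are two such representatives with $[[\phi]] = [[\psi]]$ in $[[C_{n+1},D]]$, then Theorem~\ref{thm:cs}\eqref{cs:factor} applies with $\alpha = \gamma_n$, which is semiprojective because $(C_n,\gamma_n)$ is a shape system. It shows that $\phi\circ\gamma_n$ and $\psi\circ\gamma_n$ are homotopic through $^*$-homomorphisms, so $[\phi\circ\gamma_n] = [\psi\circ\gamma_n]$ in $[C_n,D]$.

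For additivity, recall from Lemma~\ref{lem:loops} that the group operation on both $[C_{n+1},D]$ and $\mathrm{H}(C_{n+1},D)$ is the orthogonal sum, for which $D$ is stable up to suspension. We use the same isometries $v_1,v_2$ in both settings. On representatives, $(\phi\oplus\psi)\circ\gamma_n = (\phi\circ\gamma_n)\oplus(\psi\circ\gamma_n)$ holds on the nose, so $r_n$ is additive. The maps $q_n$ are homomorphisms for the same reason, since $q_n$ sends an orthogonal sum of $^*$-homomorphisms to the class of the same $^*$-homomorphism.

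The two identities are immediate on representatives. For $\phi \colon C_{n+1}\to D$, we have $r_n(q_{n+1}[\phi]) = r_n[[\phi]] = [\phi\circ\gamma_n] = \gamma_n^*[\phi] = g_n[\phi]$. Likewise $q_n(r_n[[\phi]]) = [[\phi\circ\gamma_n]] = \gamma_n^*[[\phi]] = k_n[[\phi]]$. Here we note that $k_n=\gamma_n^*$ on $\mathrm{H}(C_{n+1},D)$ is itself computed by precomposition on genuine representatives, which is consistent with its definition via post/pre-composition on $[[\cdot,\cdot]]$.

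Lemma~\ref{lem:interleave} now gives the isomorphisms of $\llim$ and $\llo$ induced by the $q_n$. The only substantive point is the well-definedness of $r_n$, and that is exactly what Theorem~\ref{thm:cs}\eqref{cs:factor} supplies. One small check is that the group structures on $[C_n,D]$ and $\mathrm{H}(C_n,D)$ are compatible under $q_n$; this holds because both are realized by the same orthogonal sum, per Lemma~\ref{lem:loops}.
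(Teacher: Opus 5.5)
Your proposal is correct and follows the paper's proof essentially step for step. You get well-definedness of $r_n$ from Theorem~\ref{thm:cs}\eqref{cs:factor} applied to the semiprojective $\gamma_n$, additivity from compatibility of precomposition with the orthogonal sum, and the two interleaving identities checked directly on representatives before invoking Lemma~\ref{lem:interleave}.
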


\begin{proof}
	If $[[\phi]] = [[\psi]]$ in $\mathrm{H}(C_{n+1},D)$ then $\phi \circ \gamma_n$ and $\psi \circ \gamma_n$ are homotopic through $^*$-homomorphisms by Theorem~\ref{thm:cs}\eqref{cs:factor}, each $\gamma_n$ being semiprojective.
	Therefore, $r_n$ is well-defined, and it is a homomorphism because pre-composition is compatible with the orthogonal sum, as is the quotient map $q_n$.
	For a $^*$-homomorphism $\phi \colon C_{n+1} \to D$ we have $r_n(q_{n+1}[\phi]) = [\phi \circ \gamma_n] = \gamma_n^*[\phi]$ and $q_n(r_n[[\phi]]) = [[\phi \circ \gamma_n]] = \gamma_n^*[[\phi]]$, which are the hypotheses of Lemma~\ref{lem:interleave}.
\end{proof}

To also identify $[[C,D]]_{\Hd}$ with the quotient by the closure of zero, we record that $[[C,D]]$ is a topological group in the generality we need---the argument of \cite[Theorem~4.5]{Carrion-Schafhauser23} for $E(A,B)$ applies unchanged.

\begin{lemma}\label{lem:top-group}
	Let $C$ be separable and let $D$ be the suspension of a stable $C^*$-algebra.
	Then $[[C,D]]$ is a topological abelian group under the orthogonal sum, and its Kolmogorov quotient $[[C,D]]_{\Hd}$ is the quotient $[[C,D]]/\cz$ by the closure of zero.
	In particular $[[C,D]]_{\Hd}$ is Hausdorff.
\end{lemma}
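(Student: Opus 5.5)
We adapt the argument of \cite[Theorem~4.5]{Carrion-Schafhauser23}, using Pimsner's condition (Theorem~\ref{thm:cs}\eqref{cs:pimsner}) for $D$ and for $C(\Ndag,D)$, which is again the suspension of a stable algebra. The plan has three steps: show that addition and inversion are sequentially continuous, upgrade sequential continuity to continuity, and then identify the Kolmogorov quotient.

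\emph{Step 1: sequential continuity.} By Lemma~\ref{lem:loops}, $[[C,D]]$ is an abelian group under the orthogonal sum, and this sum is natural for post-composition by $^*$-homomorphisms of stable targets. We use this for the evaluations $\ev_m \colon C(\Ndag,D) \to D$. We take the isometries for $C(\Ndag,D)$ to be the constant multipliers $v_1, v_2$ of $D$; then $\ev_m(y \oplus y') = \ev_m(y) \oplus \ev_m(y')$.
\begin{itemize}
\item[] Suppose $x_m \to x$ and $x_m' \to x'$. Pimsner's condition gives $y, y' \in [[C,C(\Ndag,D)]]$ realizing these sequences. Then $y \oplus y'$ realizes $x_m \oplus x_m' \to x \oplus x'$.
\item[] Similarly, reversing the suspension coordinate of $D$ gives a $^*$-automorphism. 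Applied pointwise over $\Ndag$, it turns a realization of $x_m \to x$ into one of $-x_m \to -x$.
\end{itemize}

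\emph{Step 2: from sequences to continuity.} Theorem~\ref{thm:cs}\eqref{cs:countable} says $[[C,D]]$ is first countable, with no separability of $D$ needed. Hence $[[C,D]] \times [[C,D]]$ is first countable too. On first-countable spaces, sequential continuity implies continuity. So addition and inversion are continuous, and $[[C,D]]$ is a topological abelian group.

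\emph{Step 3: the Kolmogorov quotient.} In any topological group $G$, two points $x, y$ are topologically indistinguishable exactly when $x - y \in \cz$. To see this, note that translations are homeomorphisms, so $x \in \overline{\{y\}}$ if and only if $x - y \in \cz$. Moreover $\cz$ is a closed subgroup. By symmetry under inversion, $x \in \overline{\{y\}}$ if and only if $y \in \overline{\{x\}}$. Therefore the Kolmogorov quotient is the coset space $G/\cz$ with the quotient topology.

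A topological group that is $T_0$ is Hausdorff; indeed it is $T_1$, because its closure of zero is trivial, and a $T_1$ topological group is Hausdorff by the standard uniformity argument. This gives $[[C,D]]_{\Hd} = [[C,D]]/\cz$, Hausdorff, matching Theorem~\ref{thm:cs}\eqref{cs:haus}.

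\emph{Main obstacle.} The delicate point is Step 1: checking that the orthogonal sum on $[[C,C(\Ndag,D)]]$ is compatible with evaluation at each point, and that the evaluations are well-defined on asymptotic-homotopy classes. Both follow from the naturality of post-composition and the choice of constant isometries. One must also make sure Pimsner's condition is applied with target $D$ rather than $C(\Ndag,D)$; this is why we only need the stated version of Theorem~\ref{thm:cs}\eqref{cs:pimsner}.
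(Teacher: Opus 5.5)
Your proposal is correct and follows essentially the same route as the paper: Pimsner's condition applied over $C(\Ndag,D)$ gives sequential continuity of the group operations, first countability upgrades this to continuity, and translation-invariance identifies the Kolmogorov quotient with $[[C,D]]/\cz$. The only differences are cosmetic: the paper handles subtraction in one step using concatenation in the suspension coordinate, whereas you treat sum and inverse separately, using constant isometries and reversal of the suspension coordinate.
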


\begin{proof}
	$[[C,D]]$ is an abelian group by Lemma~\ref{lem:loops}.
	For continuity of subtraction, suppose $x_m \to x$ and $y_m \to y$.
	Pimsner's condition (Theorem~\ref{thm:cs}\eqref{cs:pimsner}) provides $\hat x,\hat y \in [[C,C(\Ndag,D)]]$ with $\hat x(m) = x_m$, $\hat y(m) = y_m$ for $m \in \mathbb N$ and $\hat x(\infty) = x$, $\hat y(\infty) = y$.
	Since $C(\Ndag,D)$ is again the suspension of a stable $C^*$-algebra, the difference $\hat x - \hat y$ is defined, and $(\hat x - \hat y)(m) = x_m - y_m$ for $m \in \mathbb N$ and $(\hat x - \hat y)(\infty) = x - y$, the evaluations being suspensions of $^*$-homomorphisms and hence compatible with concatenation in the suspension coordinate, which computes the sum (Lemma~\ref{lem:loops}).
	Thus $x_m - y_m \to x - y$ by Theorem~\ref{thm:cs}\eqref{cs:pimsner} again.
	First countability (Theorem~\ref{thm:cs}\eqref{cs:countable}) upgrades this to continuity of subtraction, so $[[C,D]]$ is a topological group.
	Translations are then homeomorphisms, so two classes are topologically indistinguishable exactly when their difference lies in $\cz$.
	Hence the Kolmogorov quotient is the quotient by $\cz$, and it is Hausdorff because the $T_0$ quotient of a topological group is Hausdorff.
\end{proof}

\begin{proposition}\label{prop:ELlim}
	Let $C$ be separable with shape system $(C_n,\gamma_n)$ and let $D$ be the suspension of a stable $C^*$-algebra.
	There is a surjective homomorphism
	\begin{equation}\label{eq:ELlim}
		\Theta \colon [[C,D]] \longrightarrow \llim[C_n,D]
		\qquad\text{with}\qquad
		\ker\Theta = \cz,
	\end{equation}
	namely the composition of the quotient map onto $[[C,D]]_{\Hd} = [[C,D]]/\cz$ with the isomorphisms $[[C,D]]_{\Hd} \cong \llim\mathrm{H}(C_n,D) \cong \llim[C_n,D]$.
\end{proposition}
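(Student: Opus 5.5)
The proof will assemble $\Theta$ from results already established, in the order given in the statement, and then check the three claims: $\Theta$ is a homomorphism, it is surjective, and its kernel is $\cz$.

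\textbf{Step 1 (the quotient).} By Lemma~\ref{lem:top-group}, $[[C,D]]$ is a topological abelian group, and $\cz$ is a subgroup because it is the closure of a subgroup. The Kolmogorov quotient $[[C,D]]_{\Hd}$ is precisely $[[C,D]]/\cz$. So the quotient map $\pi\colon [[C,D]]\to[[C,D]]_{\Hd}$ is a surjective homomorphism with kernel exactly $\cz$.

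\textbf{Step 2 (the Hausdorffization isomorphism).} Theorem~\ref{thm:cs}\eqref{cs:haus} supplies a homeomorphism $[[C,D]]_{\Hd}\cong\llim\mathrm{H}(C_n,D)$, induced by the maps $\gamma_{\infty,n}^*$. We must check that this bijection is additive. Each $\gamma_{\infty,n}^*\colon[[C,D]]\to\mathrm{H}(C_n,D)$ is given by pre-composition. Pre-composition commutes with the orthogonal sum $v_1\phi v_1^*+v_2\psi v_2^*$, because the same isometries are used on both sides. Hence each $\gamma_{\infty,n}^*$ is a homomorphism, and so is the induced map to the inverse limit of groups. This map factors through $\pi$, since it is constant on classes of topologically indistinguishable points; the induced map is therefore a bijective homomorphism, i.e.\ a group isomorphism.

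The one subtlety I expect here is that the limit $\llim\mathrm{H}(C_n,D)$ in Theorem~\ref{thm:cs}\eqref{cs:haus} is taken as sets or spaces. It coincides with the group-theoretic $\ker(1-\sh)$, because the structure maps $\gamma_n^*$ are homomorphisms. I also need the compatibility $\gamma_n^*\circ\gamma_{\infty,n+1}^*=\gamma_{\infty,n}^*$, which holds since $\gamma_{\infty,n+1}\circ\gamma_n=\gamma_{\infty,n}$.

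\textbf{Step 3 (passing to genuine homotopy classes).} Lemma~\ref{lem:compare-towers} gives an isomorphism $\llim\mathrm{H}(C_n,D)\cong\llim[C_n,D]$, induced by the homomorphisms $r_n$ via Lemma~\ref{lem:interleave}. Explicitly, a compatible family $([[\phi_{n}]])_n$ is sent to $([\phi_{n+1}\circ\gamma_n])_n$.

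\textbf{Conclusion.} Define $\Theta$ as the composite of $\pi$ with these two isomorphisms. It is a homomorphism and it is surjective. Its kernel is $\ker\pi=\cz$, because the later maps are injective.

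Most of the real content is already contained in Theorem~\ref{thm:cs}\eqref{cs:haus} and Lemma~\ref{lem:compare-towers}. The main point needing care is Step 2: verifying that the homeomorphism of Theorem~\ref{thm:cs}\eqref{cs:haus} respects the group structures from Lemma~\ref{lem:loops}. For this it is cleanest to use the orthogonal-sum description of the sum, rather than loop concatenation, since pre-composition visibly preserves it.
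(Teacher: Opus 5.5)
Your proposal is correct and follows the paper's proof essentially step for step. Like the paper, you identify the Kolmogorov quotient with $[[C,D]]/\cz$ via Lemma~\ref{lem:top-group}, note that the Hausdorffization homeomorphism of Theorem~\ref{thm:cs}\eqref{cs:haus} is additive because each $\gamma_{\infty,n}^*$ is, and finish with Lemma~\ref{lem:compare-towers}. Your extra details, the compatibility $\gamma_n^*\circ\gamma_{\infty,n+1}^*=\gamma_{\infty,n}^*$ and the explicit formula $([[\phi_n]])_n\mapsto([\phi_{n+1}\circ\gamma_n])_n$, are accurate.
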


\begin{proof}
	Lemma~\ref{lem:top-group} identifies the Kolmogorov quotient $[[C,D]]_{\Hd}$ with $[[C,D]]/\cz$.
	The homeomorphism $[[C,D]]_{\Hd} \cong \llim\mathrm{H}(C_n,D)$ of Theorem~\ref{thm:cs}\eqref{cs:haus} is additive, each $\gamma_{\infty,n}^*$ being additive, hence an isomorphism of groups, and Lemma~\ref{lem:compare-towers} provides the isomorphism $\llim\mathrm{H}(C_n,D) \cong \llim[C_n,D]$.
\end{proof}

$E$-theory and its Hausdorffization are the instances $E(A,B) = [[SA \otimes \K,SB \otimes \K]]$ and $EL(A,B) = E(A,B)/\cz$ \cite[Section~4.2]{Carrion-Schafhauser23}, where Equation~\eqref{eq:ELlim} reads
\[
	EL(A,B) \cong \llim[C_n,SB \otimes \K]
\]
and yields the surjection $\Theta$ and the right-hand term of Theorem~\ref{thm:main}.

\section{The connecting map and the Milnor sequence}\label{sec:milnor}

Throughout this section $C$ is separable with a fixed shape system $(C_n,\gamma_n)$, and $D$ is the suspension of a separable stable $C^*$-algebra, hence itself stable, so that $[[C,D]]$, $[C_n,D]$, and $[C_n,SD]$ are abelian groups with the class of the zero $^*$-homomorphism as neutral element (Lemma~\ref{lem:loops}).
Theorem~\ref{thm:main} will follow from the following more general statement (it is the case $C = SA \otimes \K$, $D = SB \otimes \K$).

\begin{theorem}\label{thm:general}
	Let $C$ be a separable $C^*$-algebra with shape system $(C_n,\gamma_n)$ and let $D$ be the suspension of a separable stable $C^*$-algebra.
	With $\Theta$ as in Proposition~\ref{prop:ELlim}, there is an exact sequence of abelian groups
	\[
		0 \to \llo [C_n,SD] \xrightarrow{\ \partial\ } [[C,D]]
		\xrightarrow{\ \Theta\ } \llim[C_n,D] \to 0 .
	\]
\end{theorem}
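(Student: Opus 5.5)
Since $\Theta$ is a surjective homomorphism with $\ker\Theta=\cz$ (Proposition~\ref{prop:ELlim}), it remains to construct an injective homomorphism $\partial\colon\llo[C_n,SD]\to[[C,D]]$ with image exactly $\cz$. The plan is to build $\partial$ from Dadarlat's homotopy limit, by twisting the homotopies of the zero strong homotopy morphism by loops. Given $(u_n)_n\in\prod_n[C_n,SD]$, choose representatives $u_n\colon C_n\to SD$, regarded as loops at $0$ in $\Hom(C_n,D)$. Form the strong homotopy morphism with $\phi_n=0$ and $h_n=u_n$, viewed as maps $C_n\to C([0,1],D)$ with both endpoints $0=\phi_{n+1}\circ\gamma_n$. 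Then set
\[
\tilde\partial\big((u_n)_n\big)=[[\hlim(\underline 0,\underline u)]].
\]
By Theorem~\ref{thm:dad}\eqref{dad:bij} this depends only on the homotopy class of the strong homotopy morphism, so replacing each $u_n$ by a homotopic loop leaves it unchanged. For additivity, take orthogonal sums levelwise and use formula~\eqref{eq:telescope}: the homotopy limit of a levelwise orthogonal sum is the orthogonal sum of the homotopy limits, which is the group law by Lemma~\ref{lem:loops}. By Theorem~\ref{thm:dad}\eqref{dad:rep}, $\tilde\partial(u)\circ\gamma_{\infty,n}$ is asymptotically homotopic to $\phi_n=0$ for every $n$, so $\Theta\tilde\partial=0$ and the image lies in $\cz$.

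\textbf{Factoring through $\llo$.} I would show that $\tilde\partial$ kills the image of $1-\sh$. Given $(w_n)$ with $w_n\in[C_n,SD]$, the family $\phi'_n=0$, with homotopy at stage $n$ given by $u_n\ast (w_{n+1}\circ\gamma_n)\ast \bar w_n$, should be homotopic as a strong homotopy morphism to $(\underline 0,\underline u)$. The homotopy is given by the paths $w_n$ themselves, which slide the basepoints: at stage $n$ the basepoint moves along $w_n$, and the square filling is the standard one. Since concatenation agrees with orthogonal sum and the groups are abelian (Lemma~\ref{lem:loops}), the twisted homotopy represents $u_n+\gamma_n^*w_{n+1}-w_n$. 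Choosing $u_n=w_n-\gamma_n^*w_{n+1}$ then gives the trivial strong homotopy morphism, so $\tilde\partial$ vanishes on $\operatorname{im}(1-\sh)$ and descends to $\partial$.

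\textbf{Surjectivity onto $\cz$.} Let $x\in\cz$ and represent it, via Theorem~\ref{thm:dad}\eqref{dad:rep}, by $\hlim(\underline\phi,\underline h)$. Since $x\in\ker\Theta$, its image in $\llim[C_n,D]$ vanishes. By the interleaving of Lemma~\ref{lem:compare-towers}, $[\phi_{n+1}\circ\gamma_n]=r_n([[\phi_{n+1}]])=0$ in $[C_n,D]$, using $[[\phi_{n+1}]]=\gamma_{\infty,n+1}^*x=0$ in $\mathrm H(C_{n+1},D)$. Pick null-homotopies $k_n$ of $\phi_{n+1}\circ\gamma_n$. Then $\hlim(\underline\phi,\underline h)$ is asymptotically homotopic to the homotopy limit of the system restricted along $\gamma$ and shifted by one stage; equivalently, pass to the cofinal subsystem, accepting the reindexing, which $\llo$ ignores. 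Using the $k_n$ to move every basepoint to $0$ converts the system into one of the form $(\underline 0,\underline u)$ with $u_n$ the closed loop $\bar k_{n}\ast h_{n+1}\gamma_n\ast k_{n+1}\gamma_{n+1}\gamma_n$. This gives $x\in\operatorname{im}\partial$.

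\textbf{Injectivity.} This is the main obstacle. Suppose $\hlim(\underline 0,\underline u)$ is asymptotically homotopic to $0$. By Theorem~\ref{thm:dad}\eqref{dad:bij} there is a homotopy of strong homotopy morphisms from $(\underline 0,\underline u)$ to $(\underline 0,\underline 0)$. This consists of paths $w_n$ in $\Hom(C_n,D)$ from $0$ to $0$, which are loops, together with squares in $\Hom(C_n,D)$ bounded by $u_n$, $w_n$, $w_{n+1}\circ\gamma_n$ and the constant path. Each such square is a $^*$-homomorphism $C_n\to C([0,1]^2,D)$ exhibiting $u_n\simeq w_n-\gamma_n^*w_{n+1}$ in $[C_n,SD]$, so $(u_n)\in\operatorname{im}(1-\sh)$. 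The delicate point is Dadarlat's precise notion of homotopy between strong homotopy morphisms: it is a strong homotopy morphism into $C([0,1],D)$, possibly only after passing to a subsequence and composing with the connecting maps. If so, I would absorb this using cofinality invariance of $\llo$ (as in Lemma~\ref{lem:interleave}) together with the $r_n$ trick from Lemma~\ref{lem:compare-towers}, and verify that the resulting boundary identification still yields a coboundary.
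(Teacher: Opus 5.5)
Your proof is correct and has the same architecture as the paper's: the same $\partial_0$ via $\hlim(\underline 0,\underline u)$, the same well-definedness, additivity and $\Theta\circ\partial_0=0$, and the same basepoint-sliding square in the surjectivity step. Two steps are argued differently.

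\emph{Killing $\operatorname{im}(1-\sh)$.} The paper notes that $\hlim(\underline 0,\underline{\sh\zeta})$ is exactly the time-translate $t\mapsto t+1$ of $\hlim(\underline 0,\underline\zeta)$ and invokes reparametrization invariance. You instead exhibit a homotopy of strong homotopy morphisms whose paths are the loops $w_n$. This stays inside Dadarlat's homotopy category and never uses reparametrization.

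\emph{Injectivity.} The paper represents the asymptotic null-homotopy by a strong homotopy morphism $(\underline\Phi,\underline H)$ into $C([0,1],D)$. It compares the two ends with $(\underline 0,\underline\zeta)$ and with $0$ using Theorem~\ref{thm:dad}\eqref{dad:bij}, and finishes with a groupoid computation. Its loops $p_n=\overline{a_n}\ast\Phi_n\ast c_n$ are just the paths of the composite of those three homotopies. You apply the ``in particular'' clause of Theorem~\ref{thm:dad}\eqref{dad:bij} directly to $(\underline 0,\underline u)$ and the zero morphism, which is legitimate and shorter.

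Your two steps together show that $(\underline 0,\underline u)$ and $(\underline 0,\underline{u'})$ are homotopic exactly when $u-u'\in\operatorname{im}(1-\sh)$, which is the classical Milnor mechanism. The paper's route buys a one-line shift argument instead.

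Your worry about subsequences is unnecessary. With constant target $D$, a homotopy of strong homotopy morphisms is simply a strong homotopy morphism into $C([0,1],D)$ with the given ends, and this is how the paper uses it.

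Two small points in the surjectivity step. First, the shift by one stage is not needed, since $h_n$ already joins $\phi_n$ to the null-homotopic $\phi_{n+1}\circ\gamma_n$; this is what the paper does. If you keep the shift, its justification is the reparametrization $t\mapsto t+1$, not passage to a cofinal subsystem, because the index set does not change. Second, the loop should read $\overline{k_n}\ast(h_{n+1}\circ\gamma_n)\ast(k_{n+1}\circ\gamma_n)$. As written, $k_{n+1}$ is defined on $C_{n+1}$ and cannot be composed with $\gamma_{n+1}\circ\gamma_n\colon C_n\to C_{n+2}$.
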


\noindent The rest of this section is devoted to its proof.

\subsection{The connecting homomorphism}

We consider the tower $([C_n,SD])_n$ with structure maps $\gamma_n^*$ and, as in Section~\ref{sec:prelim}, realize $\llo[C_n,SD]$ as the cokernel of $1-\sh$ on $\prod_n[C_n,SD]$, where now $\sh\big((\eta_n)_n\big) = \big(\gamma_n^*(\eta_{n+1})\big)_n$.

Let $\eta_n \colon C_n \to SD$ ($n \in \mathbb N$) be $^*$-homomorphisms, i.e., loops at $0$ in $\Hom(C_n,D)$.
The pair $(\underline 0,\underline\eta)$ is then a strong homotopy morphism $(C_n,\gamma_n) \to D$ whose $^*$-homomorphisms are all zero.
Indeed, the homotopy $\eta_n$ is required to run from the zero map to the zero map, which is exactly what being a loop at $0$ says.
Define
\[
	\partial_0 \colon \prod_n[C_n,SD] \longrightarrow [[C,D]],\qquad
	\partial_0\big(([\eta_n])_n\big) \coloneqq \big[\big[\hlim(\underline 0,\underline\eta)\big]\big];
\]
see Figure~\ref{fig:partial}.
This does not depend on the choice of representatives.
For this, note that if $\eta_n \simeq \eta_n'$ through loops for every $n$, then these homotopies, squares in $\Hom(C_n,D)$ whose other two edges are constant at $0$, are exactly the data of a homotopy of strong homotopy morphisms from $(\underline 0,\underline\eta)$ to $(\underline 0,\underline{\eta'})$, so the two homotopy limits agree in $[[C,D]]$ by Theorem~\ref{thm:dad}\eqref{dad:bij}.

\begin{figure}[ht]
	\centering
	\begin{tikzpicture}[font=\footnotesize]
		\def\r{1.5pt}
		\def\ph{1.22}
		\draw[-{Computer Modern Rightarrow}] (-0.7,0) -- (6.5,0) node[right] {$t$};
		\foreach \x/\lab in {0/1, 1.8/2, 3.6/3, 5.4/4} {
				\fill (\x,0) circle (\r);
				\node[below=2pt] at (\x,0) {$0$};
				\node[below=12pt] at (\x,0) {\scriptsize $t=\lab$};
			}
		\draw (0,0)   to[out=90,in=90,looseness=1.7] (1.8,0);
		\draw (1.8,0) to[out=90,in=90,looseness=1.7] (3.6,0);
		\draw (3.6,0) to[out=90,in=90,looseness=1.7] (5.4,0);
		\node at (0.9,\ph) {$\eta_1$};
		\node at (2.7,\ph) {$\eta_2$};
		\node at (4.5,\ph) {$\eta_3$};
		\node at (6.0,0.1) {$\cdots$};
		\node at (5.95,0.62) {\scriptsize $t\to\infty$};
	\end{tikzpicture}
	\caption{The connecting map $\partial$: the loop $\eta_n$ runs over $t \in [n,n+1]$, returning to $0$ at each integer, so $\partial_0(\eta)$ vanishes on every stage $C_n$ yet can represent a nonzero class of $[[C,D]]$.}
	\label{fig:partial}
\end{figure}

\begin{lemma}\label{lem:partial-hom}
	$\partial_0 \colon \prod_n[C_n,SD] \to [[C,D]]$ is a group homomorphism, and $\Theta \circ \partial_0 = 0$.
\end{lemma}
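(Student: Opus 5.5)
To prove Lemma~\ref{lem:partial-hom} we establish additivity of $\partial_0$ and the vanishing $\Theta\circ\partial_0=0$ separately, working throughout with representatives.

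\emph{Additivity.} Fix loops $\eta_n,\eta_n'\colon C_n\to SD$. By Lemma~\ref{lem:loops}, the sum $[\eta_n]+[\eta_n']$ in $[C_n,SD]$ is represented by the orthogonal sum $\eta_n\oplus\eta_n'=v_1\eta_n(\cdot)v_1^*+v_2\eta_n'(\cdot)v_2^*$. Here $v_1,v_2$ are isometries in the multiplier algebra of $D$, and they act pointwise in the loop parameter. So $(\underline 0,\underline{\eta\oplus\eta'})$ is again a strong homotopy morphism with all maps zero. The telescope formula~\eqref{eq:telescope} is built pointwise from the $h_m$, and conjugation by the fixed $v_i$ commutes with evaluation. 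Hence
\[
\hlim(\underline 0,\underline{\eta\oplus\eta'})_t=v_1\,\hlim(\underline 0,\underline\eta)_t(\cdot)\,v_1^*+v_2\,\hlim(\underline 0,\underline{\eta'})_t(\cdot)\,v_2^*
\]
exactly, for every $t$. The right-hand side is the orthogonal sum of the two asymptotic morphisms. By Lemma~\ref{lem:loops}, this represents the group sum in $[[C,D]]$, and additivity follows. The one point to check is that the orthogonal sum on $[[C,D]]$ is taken with the same (or homotopic) isometries; independence of that choice up to homotopy is already noted in the paper. Since $\partial_0$ is additive and $\partial_0(0)=0$, it is a homomorphism.

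\emph{Vanishing of $\Theta\circ\partial_0$.} By Proposition~\ref{prop:ELlim}, $\Theta$ factors through the maps $\gamma_{\infty,n}^*\colon[[C,D]]\to\mathrm{H}(C_n,D)$, followed by the isomorphism $\llim\mathrm{H}(C_n,D)\cong\llim[C_n,D]$. So it suffices to show $\gamma_{\infty,n}^*\partial_0(\eta)=0$ in $\mathrm{H}(C_n,D)$ for every $n$. This is Theorem~\ref{thm:dad}\eqref{dad:rep}: $[[\hlim(\underline 0,\underline\eta)\circ\gamma_{\infty,n}]]=[[\phi_n]]=[[0]]=0$. Thus the image of $\partial_0(\eta)$ in $\llim\mathrm{H}(C_n,D)$ is the zero thread, and hence $\Theta\partial_0=0$.

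We expect the main difficulty to be the additivity step, namely matching the group structure on $[[C,D]]$ with orthogonal sums of homotopy limits. If the exact pointwise identity above is awkward, because $\hlim$ is defined only up to equivalence, we fall back on naturality. We apply Theorem~\ref{thm:dad}\eqref{dad:bij} to the strong homotopy morphism into $D\oplus D$ given by the pair $(\eta_n,\eta_n')$. We then push forward along the $^*$-homomorphism $D\oplus D\to D$, $(a,b)\mapsto v_1av_1^*+v_2bv_2^*$; post-composition by a $^*$-homomorphism commutes with $\hlim$ up to equivalence.
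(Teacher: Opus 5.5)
Your proposal is correct and follows the paper's own argument. For additivity, you choose the orthogonal sums $\eta_n\oplus\eta_n'$ as representatives (Lemma~\ref{lem:loops}) and read off from Equation~\eqref{eq:telescope} that the homotopy limit of the levelwise orthogonal sum is the orthogonal sum of the homotopy limits. For vanishing, $\Theta\circ\partial_0=0$ follows from Theorem~\ref{thm:dad}\eqref{dad:rep} with all $\phi_n=0$, since $\Theta$ is computed from the restrictions $\gamma_{\infty,n}^*$. Your fallback via $D\oplus D\to D$ is harmless but not needed: the telescope identity holds on the dense union of the images of the $C_n$, so it holds up to equivalence, which is all that is required.
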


\begin{proof}
	Additivity: the orthogonal sum of two homotopy limits is the homotopy limit of the levelwise orthogonal sum of the data (immediate from Equation~\eqref{eq:telescope}), and choosing the orthogonal sums $\eta_n \oplus \eta_n'$ as representatives of $(\eta+\eta')_n$ (Lemma~\ref{lem:loops}) makes the homotopy data of $\partial_0(\eta) + \partial_0(\eta')$ and of $\partial_0(\eta+\eta')$ coincide.
	For $\Theta \circ \partial_0 = 0$: by Theorem~\ref{thm:dad}\eqref{dad:rep}, $[[\hlim(\underline 0,\underline\eta) \circ \gamma_{\infty,n}]] = [[0]] = 0$ in $\mathrm{H}(C_n,D)$ for all $n$, and by its construction in Proposition~\ref{prop:ELlim} the element $\Theta(\partial_0(\eta))$ is computed from these restrictions.
	Therefore, it vanishes.
\end{proof}

The key point is the following shift-invariance, where the reparametrization-invariance of asymptotic morphisms enters.

\begin{lemma}\label{lem:shift}
	$\partial_0 \circ \sh = \partial_0$.
	Consequently, $\partial_0 \circ (1-\sh) = 0$ and $\partial_0$ descends to a homomorphism
	\[
		\partial \colon \llo [C_n,SD] \longrightarrow \cz \subseteq [[C,D]].
	\]
\end{lemma}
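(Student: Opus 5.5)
We show $\partial_0(\sh\eta) = \partial_0(\eta)$ directly from the telescope formula~\eqref{eq:telescope}, using that a reparametrization does not change the class in $[[C,D]]$. Fix representatives $\eta_n \colon C_n \to SD$. Then $\sh\eta$ is represented by the loops $\eta'_n \coloneqq \eta_{n+1}\circ\gamma_n$. For $a \in C_n$ and $t \in [m,m+1]$ with $m \ge n$, formula~\eqref{eq:telescope} gives
\[
	\hlim(\underline 0,\underline{\eta'})_t(a) = \eta_{m+1}\big(\gamma_{m+1,n}(a)\big)(t-m).
\]
The right-hand side is exactly $\hlim(\underline 0,\underline\eta)_{t+1}(a)$. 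So $\hlim(\underline 0,\underline{\eta'})$ is equivalent to the reparametrization $r(t) = t+1$ of $\hlim(\underline 0,\underline\eta)$, at least on the dense union $\bigcup_n \gamma_{\infty,n}(C_n)$. Since both are asymptotic morphisms determined up to equivalence by this formula (as in Dadarlat's construction recalled above), they are equivalent. By \cite[25.1.2(h)]{Blackadar98} an asymptotic morphism is asymptotically homotopic to its reparametrizations, so $\partial_0(\sh\eta) = \partial_0(\eta)$.

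There is one subtlety at $t<n$, and at the first unit interval where $\eta'$ starts at stage $n$ while $\eta$ runs over $[n,n+1]$. Only large $t$ matter for equivalence, and the formula above concerns $t \ge m \ge n$, so this causes no problem. If one prefers to avoid appealing to ``determined up to equivalence,'' there is an alternative. Note that $(\underline 0, (\eta_{n+1})_n)$ is a strong homotopy morphism for the shifted system $(C_{n+1},\gamma_{n+1})$. Its precomposition data with $\gamma_n$ gives $(\underline 0,\underline{\eta'})$, and the comparison goes through Theorem~\ref{thm:dad}\eqref{dad:bij} together with the fact that passing to a cofinal subsystem does not change the homotopy limit. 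I expect the reparametrization route to be the cleanest and would use it.

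Next, $\partial_0\circ(1-\sh) = \partial_0 - \partial_0\circ\sh = 0$ because $\partial_0$ is a homomorphism (Lemma~\ref{lem:partial-hom}). Hence $\partial_0$ factors through $\operatorname{coker}(1-\sh) = \llo[C_n,SD]$. The image lies in $\ker\Theta$, again by Lemma~\ref{lem:partial-hom}, and $\ker\Theta = \cz$ by Proposition~\ref{prop:ELlim}.

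The main obstacle is making the identification of $\hlim(\underline 0,\underline{\eta'})$ with the time-shifted $\hlim(\underline 0,\underline\eta)$ rigorous. The homotopy limit is specified on elements of $C$ only through the dense images of the $C_n$ and extended by an approximate lifting procedure. I would handle this by observing that equivalence of asymptotic morphisms can be checked on a dense subset, since asymptotic morphisms are asymptotically bounded by $\|x\|$.
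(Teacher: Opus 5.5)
Your proposal is correct and is essentially the paper's proof. The telescope formula~\eqref{eq:telescope} shows that $\hlim(\underline 0,\underline{\sh\eta})$ is the time-shift $t\mapsto t+1$ of $\hlim(\underline 0,\underline\eta)$, hence asymptotically homotopic to it by \cite[25.1.2(h)]{Blackadar98}; the descent to $\llo[C_n,SD]$ and the containment of the image in $\ker\Theta=\cz$ then follow from Lemma~\ref{lem:partial-hom}, exactly as you say. Your density remark is a harmless elaboration of what the paper takes for granted when it says the formula determines the homotopy limit up to equivalence, and the alternative cofinal-subsystem route is not needed.
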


\begin{proof}
	Fix $^*$-homomorphisms $\zeta_n \colon C_n \to SD$ ($n \in \mathbb N$), write $\underline\zeta = (\zeta_n)_n$, and set $\psi = \hlim(\underline 0,\underline\zeta)$, realized via Equation~\eqref{eq:telescope}: for $a \in C_n$ and $t \in [m,m+1]$ ($m \ge n$), $\psi_t(a) = \zeta_m(\gamma_{m,n}(a))(t-m)$, where $\zeta_m(\,\cdot\,)(s) \in D$ denotes the loop $\zeta_m$ at parameter $s \in [0,1]$ (vanishing at $s = 0,1$).
	The shifted sequence is $(\sh\zeta)_m = \gamma_m^*\zeta_{m+1} = \zeta_{m+1} \circ \gamma_m$, so for $a \in C_n$ and $t \in [m,m+1]$,
	\begin{align*}
		\big(\hlim(\underline 0,\underline{\sh\zeta})\big)_t(a)
		 & = \zeta_{m+1}\big(\gamma_m(\gamma_{m,n}(a))\big)(t-m) \\
		 & = \zeta_{m+1}\big(\gamma_{m+1,n}(a)\big)(t-m)
		= \psi_{t+1}(a),
	\end{align*}
	using $\gamma_m \circ \gamma_{m,n} = \gamma_{m+1,n}$ and the same formula for $\psi$ on
	$[m+1,m+2]$.
	Thus $\hlim(\underline 0,\underline{\sh\zeta})$ is the reparametrization $t \mapsto t+1$ of $\psi$, hence asymptotically homotopic to $\psi$ \cite[25.1.2(h)]{Blackadar98}.
	Hence $\partial_0(\sh\zeta) = [[\psi]] = \partial_0(\zeta)$.

	Since $\partial_0$ is a homomorphism (Lemma~\ref{lem:partial-hom}), $\partial_0 \circ (1-\sh) = \partial_0 - \partial_0 \circ \sh = 0$.
	As $\llo[C_n,SD]$ is the cokernel of $1-\sh$, $\partial_0$ descends to $\partial$.
	Finally, $\operatorname{im}\partial \subseteq \cz$ because $\Theta \circ \partial_0 = 0$ (Lemma~\ref{lem:partial-hom}).
\end{proof}

\subsection{Exactness}

Now we show that $\partial$ is injective and that its image is all of $\cz$.
Surjectivity comes first: from the homotopy data of a representation of a class in $\cz$ as a homotopy limit, we build an explicit preimage under $\partial$.

\begin{lemma}\label{lem:surj}
	Let $\psi = \hlim(\underline\phi,\underline h)$ be the homotopy limit of a strong homotopy morphism $(\underline\phi,\underline h) \colon (C_n,\gamma_n) \to D$ and suppose $[[\psi]] \in \cz$.
	Then each $\phi_n$ is null-homotopic through $^*$-homomorphisms, and for any null-homotopies $k_n \colon C_n \to C([0,1],D)$ with $\ev_0 k_n = \phi_n$ and $\ev_1 k_n = 0$, the concatenations
	\begin{equation}\label{eq:Lambda0}
		\Lambda_n \coloneqq \overline{k_n} \ast h_n \ast (k_{n+1} \circ \gamma_n)
	\end{equation}
	are loops at $0$, and $\partial_0\big(([\Lambda_n])_n\big) = [[\psi]]$.
	In particular $\partial$ maps $\llo[C_n,SD]$ onto $\cz$.
\end{lemma}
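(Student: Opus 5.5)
First I will show that each $\phi_n$ is null-homotopic. Since $[[\psi]] \in \cz = \ker\Theta$ by Proposition~\ref{prop:ELlim}, the thread $\Theta([[\psi]]) \in \llim[C_n,D]$ vanishes. Tracing through the isomorphisms of Proposition~\ref{prop:ELlim} and Lemma~\ref{lem:compare-towers}, the $n$-th coordinate of this thread is $r_n$ applied to the image $[[\psi\circ\gamma_{\infty,n+1}]]$ in $\mathrm{H}(C_{n+1},D)$. By Theorem~\ref{thm:dad}\eqref{dad:rep} that image is $[[\phi_{n+1}]]$, so $r_n$ sends it to $[\phi_{n+1}\circ\gamma_n]$. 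Vanishing therefore gives $[\phi_{n+1}\circ\gamma_n] = 0$ in $[C_n,D]$. The homotopy $h_n$ connects $\phi_n$ to $\phi_{n+1}\circ\gamma_n$, so $[\phi_n] = 0$ as well. An alternative route avoids the bookkeeping: $q_n([\phi_n]) = [[\phi_n]] = 0$ in $\mathrm{H}(C_n,D)$, and Theorem~\ref{thm:cs}\eqref{cs:factor} applied to $\gamma_{n}$ gives $\phi_{n+1}\circ\gamma_n \simeq 0$, hence $\phi_n \simeq 0$. Either way we may choose null-homotopies $k_n$.

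Next, the concatenation $\Lambda_n = \overline{k_n}\ast h_n\ast(k_{n+1}\circ\gamma_n)$ is well defined. The path $\overline{k_n}$ runs from $0$ to $\phi_n$, then $h_n$ runs from $\phi_n$ to $\phi_{n+1}\circ\gamma_n$, and $k_{n+1}\circ\gamma_n$ runs from $\phi_{n+1}\circ\gamma_n$ to $0$. Hence $\Lambda_n$ is a loop at $0$, i.e.\ a $^*$-homomorphism $C_n \to SD$.

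The main step is to show $\partial_0(([\Lambda_n])_n) = [[\psi]]$. I will do this with Theorem~\ref{thm:dad}\eqref{dad:bij}, exhibiting a homotopy of strong homotopy morphisms from $(\underline 0,\underline\Lambda)$ to $(\underline\phi,\underline h)$. The natural homotopy has $^*$-homomorphisms $\overline{k_n}$ at level $n$, a path from $0$ to $\phi_n$. It also needs squares in $\Hom(C_n,D)$ whose four edges are: the loop $\Lambda_n$ at the bottom, $h_n$ at the top, $\overline{k_n}$ on the left, and $\overline{k_{n+1}}\circ\gamma_n$ on the right. Such a square is the standard filler obtained by progressively retracting the two outer pieces of the concatenation. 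Concretely, at height $u$ use the path
\[
\overline{k_n}|_{[u,1]} \ast h_n \ast (k_{n+1}\circ\gamma_n)|_{[0,1-u]},
\]
reparametrized to $[0,1]$. This is point-norm continuous in both variables, hence a $^*$-homomorphism $C_n \to C([0,1]^2,D)$ with the prescribed boundary.

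I expect the main obstacle to be matching this filler to the precise form of Dadarlat's definition of homotopy of strong homotopy morphisms, including orientation and compatibility conventions. If that definition is awkward, I would instead compare the homotopy limits directly via Equation~\eqref{eq:telescope}. The asymptotic morphism built from the $\Lambda_m$ differs from $\psi$ only by inserting the back-and-forth $k_{m+1}\circ\gamma_m$ followed by $\overline{k_{m+1}}\circ\gamma_{m}$ at each integer. The squares above assemble into an asymptotic homotopy through the family of retractions parametrized by $u$. I would check that this family is uniform enough in $m$ to be a genuine asymptotic morphism into $C([0,1],D)$; that uniformity is where the care is needed.

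Finally, the surjectivity claim follows at once. By Theorem~\ref{thm:dad}\eqref{dad:rep}, every class in $\cz$ is $[[\psi]]$ for some such $\psi$, so every element of $\cz$ lies in the image of $\partial$.
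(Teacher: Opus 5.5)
Your proposal is correct and follows the paper's proof. You get the null-homotopy of $\phi_n$ from Theorem~\ref{thm:cs}\eqref{cs:factor} together with $h_n$, and then build a homotopy of strong homotopy morphisms between $(\underline\phi,\underline h)$ and $(\underline 0,\underline\Lambda)$. Its maps are the paths $k_n$ (you use $\overline{k_n}$, i.e.\ the reverse orientation), and its squares fill the boundary formed by $h_n$, $\Lambda_n$, $k_n$ and $k_{n+1}\circ\gamma_n$.

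Your explicit retraction filler simply makes concrete the filler that the paper obtains from $k_n \ast \Lambda_n \simeq h_n \ast (k_{n+1}\circ\gamma_n)$. The fallback you sketch is unnecessary: the paper already uses this notion of homotopy (a strong homotopy morphism into $C([0,1],D)$, with Theorem~\ref{thm:dad}\eqref{dad:bij}) when it checks that $\partial_0$ is well defined.
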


\begin{proof}
	Since $[[\psi]] \in \cz = \ker\Theta$, the construction of $\Theta$ in Proposition~\ref{prop:ELlim} gives $[[\phi_n]] = [[\psi \circ \gamma_{\infty,n}]] = 0$ in $\mathrm{H}(C_n,D)$ for all $n$, using Theorem~\ref{thm:dad}\eqref{dad:rep} for the first equality.
	This records only that each $\phi_n$ is asymptotically null-homotopic, but in fact $\phi_n$ is null-homotopic through $^*$-homomorphisms.
	This is because $\gamma_n$ is semiprojective and $[[\phi_{n+1}]] = 0$, so Theorem~\ref{thm:cs}\eqref{cs:factor} provides a homotopy through $^*$-homomorphisms from $\phi_{n+1} \circ \gamma_n$ to $0$, and concatenating the homotopy $h_n$ (from $\phi_n$ to $\phi_{n+1} \circ \gamma_n$) with it gives one from $\phi_n$ to $0$.
	Fix null-homotopies $k_n$ as in the statement.
	In Equation~\eqref{eq:Lambda0}, $\overline{k_n}$ runs from $\ev_1 k_n = 0$ to $\ev_0 k_n = \phi_n$; then $h_n$ from $\phi_n$ to $\phi_{n+1}\gamma_n$; then $k_{n+1} \circ \gamma_n$ from $\ev_0(k_{n+1}\gamma_n) = \phi_{n+1}\gamma_n$ to $0$.
	Thus $\Lambda_n$ is a loop at $0$, i.e., a $^*$-homomorphism $C_n \to SD$.

	Now build a strong homotopy morphism $(\underline\Phi,\underline H) \colon (C_n) \to C([0,1],D)$ with $\Phi_n \coloneqq k_n$ (the path $\phi_n\rightsquigarrow 0$) and $H_n$ a filling of the square
	\[
		\begin{tikzcd}
			0 \arrow[r, "\Lambda_n"] & 0 \\
			\phi_n \arrow[u, "k_n"] \arrow[r, "h_n"'] \arrow[ur, phantom, "H_n" description] & \phi_{n+1}\gamma_n \arrow[u, "k_{n+1}\gamma_n"']
		\end{tikzcd}
	\]
	whose horizontal edges ($h_n$ below, $\Lambda_n$ above) run along the homotopy parameter of $H_n$, and whose vertical edges ($k_n$ and $k_{n+1} \gamma_n$) along the $[0,1]$-coordinate of the target $C([0,1],D)$.
	The filling exists because the two ways around the boundary are homotopic with endpoints fixed, since $k_n \ast \overline{k_n}$ cancels:
	\[
		k_n \ast \Lambda_n = k_n \ast \overline{k_n} \ast h_n \ast (k_{n+1}\gamma_n)
		\simeq h_n \ast (k_{n+1} \circ \gamma_n),
	\]
	and such a homotopy, suitably reparametrized, is exactly such a filling.
	Its ends are $\ev_0(\underline\Phi,\underline H) = (\underline\phi,\underline h)$ and $\ev_1(\underline\Phi,\underline H) = (\underline 0,\underline{\Lambda})$, so by functoriality of $\hlim$ under post-composition with the evaluations $\ev_t \colon C([0,1],D) \to D$ the asymptotic morphism $\hlim(\underline\Phi,\underline H)$ is a homotopy from $\psi = \hlim(\underline\phi,\underline h)$ to $\hlim(\underline 0,\underline{\Lambda})$.
	Hence $\partial_0\big(([\Lambda_n])_n\big) = \big[\big[\hlim(\underline 0,\underline{\Lambda})\big]\big] = [[\psi]]$.
	Finally, every class in $\cz$ is the class of such a homotopy limit (Theorem~\ref{thm:dad}\eqref{dad:rep}), and $\partial_0$ descends to $\partial$.
	Thus $\partial$ maps onto $\cz$.
\end{proof}

\begin{lemma}\label{lem:inj}
	$\partial$ is injective.
\end{lemma}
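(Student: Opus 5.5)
Suppose $\eta = ([\eta_n])_n \in \prod_n [C_n,SD]$ satisfies $\partial_0(\eta) = 0$. We must produce $\xi = ([\xi_n])_n \in \prod_n[C_n,SD]$ with $\eta = (1-\sh)\xi$, that is, $[\eta_n] = [\xi_n] - [\xi_{n+1}\circ\gamma_n]$ for every $n$. The plan is to read this off from Dadarlat's rigidity statement, Theorem~\ref{thm:dad}\eqref{dad:bij}. Since $\hlim(\underline 0,\underline\eta)$ is asymptotically homotopic to $0 = \hlim(\underline 0,\underline 0)$, the two strong homotopy morphisms $(\underline 0,\underline\eta)$ and $(\underline 0,\underline 0)$ from $(C_n,\gamma_n)$ to $D$ are homotopic as strong homotopy morphisms.

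Unwinding Dadarlat's definition, such a homotopy is a strong homotopy morphism $(\underline\Phi,\underline H)\colon (C_n,\gamma_n) \to C([0,1],D)$ with ends $(\underline 0,\underline\eta)$ and $(\underline 0,\underline 0)$. Concretely:
\begin{itemize}
\item each $\Phi_n$ is a path $\xi_n$ in $\Hom(C_n,D)$ from $0$ to $0$, that is, a loop $\xi_n \colon C_n \to SD$;
\item each $H_n\colon C_n \to C([0,1]^2,D)$ is a square whose bottom edge is $\eta_n$, whose top edge is constant $0$, and whose vertical edges are $\xi_n$ and $\xi_{n+1}\circ\gamma_n$.
\end{itemize}
Going around the boundary of the square then gives a null-homotopy of the loop
\[
\eta_n \ast (\xi_{n+1}\circ\gamma_n) \ast \overline{0} \ast \overline{\xi_n},
\]
up to orientation conventions. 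In $\pi_1(\Hom(C_n,D),0) = [C_n,SD]$, which is abelian and whose loop sum is the group operation by Lemma~\ref{lem:loops}, this says $[\eta_n] + [\xi_{n+1}\gamma_n] - [\xi_n] = 0$. Up to a global sign, which we absorb by replacing $\xi$ with $-\xi$, this is exactly $\eta = (1-\sh)\xi$. Hence $[\eta] = 0$ in $\llo[C_n,SD]$, and $\partial$ is injective.

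The main obstacle is the first step: making sure that Dadarlat's notion of homotopy between strong homotopy morphisms, as in \cite[Definition~1.5]{Dadarlat94} and used in Theorem~\ref{thm:dad}\eqref{dad:bij}, really has the form of a strong homotopy morphism into $C([0,1],D)$ with the stated ends. In particular we need the homotopy components to be genuine squares with fixed boundary behaviour, so that the endpoints of $\Phi_n$ are exactly $0$ and not merely homotopic to $0$.

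If Dadarlat's definition only gives homotopy data up to reparametrization, or only up to some looser equivalence, I would first try to normalize it. One option is to replace each $\Phi_n$ by a loop obtained by concatenating with the given end data; since those ends are constant at $0$, nothing changes up to homotopy of squares. The other option is to work directly with the telescope formula~\eqref{eq:telescope} and the semiprojectivity from Theorem~\ref{thm:cs}\eqref{cs:factor}, producing genuine squares at the cost of passing from stage $n+1$ to stage $n$. That passage is harmless by the interleaving argument of Lemma~\ref{lem:interleave}: the resulting identity holds in a cofinal subtower, and $\llo$ does not change under passage to cofinal subtowers.
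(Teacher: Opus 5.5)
Your argument is correct and is essentially the paper's proof, slightly streamlined. The paper first represents the asymptotic null-homotopy as a strong homotopy morphism $(\underline\Phi,\underline H)$ into $C([0,1],D)$ via Theorem~\ref{thm:dad}\eqref{dad:rep}; it then uses Theorem~\ref{thm:dad}\eqref{dad:bij} to attach correcting paths $a_n$, $c_n$ at the two ends and sets $p_n = \overline{a_n} \ast \Phi_n \ast c_n$. You instead apply Theorem~\ref{thm:dad}\eqref{dad:bij} directly to $(\underline 0,\underline\eta)$ and $(\underline 0,\underline 0)$, which is the special case $a_n = c_n = \mathrm{const}$, $p_n = \Phi_n$.

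The paper reads a homotopy of strong homotopy morphisms exactly as you do, namely as a strong homotopy morphism into $C([0,1],D)$ with the prescribed ends. So the fallback in your last paragraph is not needed. No sign adjustment is needed either: the square yields $[\eta_n] = [\xi_n] - \gamma_n^*[\xi_{n+1}]$ on the nose.
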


\begin{proof}
	Recall from Lemma~\ref{lem:loops} that, with $D$ stable, $[C_n,SD] = \pi_1(\mathrm{Hom}(C_n,D),0)$ is an abelian group under concatenation.
	We compute in the fundamental groupoid of $\mathrm{Hom}(C_n,D)$, writing $\ast$ for concatenation, $\overline{(\,\cdot\,)}$ for reversal, and $\mathrm{const}$ for a constant path.

	Let $(\zeta_n)$ represent a class in $\ker\partial$, so that $\hlim(\underline 0,\underline\zeta)$ is asymptotically null-homotopic.
	This null-homotopy is an asymptotic morphism $C \asto C([0,1],D)$.
	By Theorem~\ref{thm:dad}\eqref{dad:rep} it is asymptotically homotopic to $\hlim(\underline\Phi,\underline H)$ for a strong homotopy morphism $(\underline\Phi,\underline H) \colon (C_n) \to C([0,1],D)$.
	By functoriality of $\hlim$ under $\ev_t \colon C([0,1],D) \to D$, its ends $(\ev_0\Phi_n,\ev_0 H_n)$ and $(\ev_1\Phi_n,\ev_1 H_n)$ are strong homotopy morphisms whose homotopy limits are asymptotically homotopic to $\hlim(\underline 0,\underline\zeta)$ and to $0$, respectively.
	By Theorem~\ref{thm:dad}\eqref{dad:bij}, these ends are then homotopic, as strong homotopy morphisms, to $(\underline 0,\underline\zeta)$ and to the zero morphism.
	Such homotopies provide, for every $n$,
	\begin{itemize}
		\item a path $a_n \colon \ev_0\Phi_n \to 0$ with
		      $\ev_0 H_n \ast (a_{n+1} \circ \gamma_n) \simeq a_n \ast \zeta_n$, and
		\item a path $c_n \colon \ev_1\Phi_n \to 0$ with
		      $\ev_1 H_n \ast (c_{n+1} \circ \gamma_n) \simeq c_n$.
	\end{itemize}
	Indeed, each homotopy is itself a strong homotopy morphism into $C([0,1],D)$: its maps are the paths $a_n$ in the first case and $c_n$ in the second, and its connecting squares are fillings whose two ways around give precisely these two relations.
	Writing $\Phi_n$ also for the path $s \mapsto \ev_s\Phi_n$ from $\ev_0\Phi_n$ to $\ev_1\Phi_n$, the connecting square $H_n$ of $(\underline\Phi,\underline H)$ gives
	\begin{equation}\label{eq:Hsquare}
		(\ev_0 H_n) \ast (\Phi_{n+1} \circ \gamma_n) \simeq \Phi_n \ast (\ev_1 H_n).
	\end{equation}
	Put $p_n \coloneqq \overline{a_n} \ast \Phi_n \ast c_n$, a loop at $0$, i.e., an element of $[C_n,SD]$.
	Then $p_n - \gamma_n^*p_{n+1}$ is represented by
	\[
		\overline{a_n} \ast \Phi_n \ast c_n \ast \overline{(c_{n+1}\gamma_n)}
		\ast \overline{(\Phi_{n+1}\gamma_n)} \ast (a_{n+1}\gamma_n)
		\simeq \overline{a_n} \ast (\ev_0 H_n) \ast (a_{n+1}\gamma_n) \simeq \zeta_n.
	\]
	The first $\simeq$ substitutes $\overline{(\Phi_{n+1}\gamma_n)} \simeq \overline{(\ev_1 H_n)} \ast \overline{\Phi_n} \ast (\ev_0 H_n)$ from Equation~\eqref{eq:Hsquare}, cancels $c_n \ast \overline{(c_{n+1}\gamma_n)} \ast \overline{(\ev_1 H_n)} \simeq \mathrm{const}$ (using $\overline{\ev_1 H_n} \simeq (c_{n+1}\gamma_n) \ast \overline{c_n}$), and then cancels $\Phi_n \ast \overline{\Phi_n} \simeq \mathrm{const}$; the second substitutes $\ev_0 H_n \simeq a_n \ast \zeta_n \ast \overline{(a_{n+1}\gamma_n)}$.
	Hence $\zeta = (1-\sh)(p)$ and $[(\zeta_n)] = 0$ in $\llo[C_n,SD]$.
\end{proof}

\begin{proposition}\label{prop:exact}
	$\Theta$ is surjective with $\ker\Theta = \cz$, and
	$\partial \colon \llo[C_n,SD] \to \cz$ is an isomorphism whose inverse sends $[[\psi]] \in \cz$
	to the class of $([\Lambda_n])_n$ in $\llo[C_n,SD]$ constructed in Lemma~\ref{lem:surj}.
	Therefore, the sequence of Theorem~\ref{thm:general} is exact.
\end{proposition}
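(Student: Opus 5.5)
This proposition assembles results already in hand, so the plan is bookkeeping. First, the assertion that $\Theta$ is surjective with $\ker\Theta = \cz$ is exactly Proposition~\ref{prop:ELlim}. It applies because $C$ is separable and $D$ is the suspension of a stable algebra. Second, Lemma~\ref{lem:shift} gives a homomorphism $\partial \colon \llo[C_n,SD] \to [[C,D]]$ with image in $\cz$. Lemma~\ref{lem:surj} shows that this image is all of $\cz$, and Lemma~\ref{lem:inj} shows that $\partial$ is injective. Hence $\partial$ is an isomorphism onto $\cz = \ker\Theta$, and together with the surjectivity of $\Theta$ this gives exactness of
\[
0 \to \llo[C_n,SD] \xrightarrow{\ \partial\ } [[C,D]] \xrightarrow{\ \Theta\ } \llim[C_n,D] \to 0
\]
at all three spots.

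The one point needing more than citation is the description of $\partial^{-1}$. Given $[[\psi]] \in \cz$, I would represent it as $\hlim(\underline\phi,\underline h)$ via Theorem~\ref{thm:dad}\eqref{dad:rep}, choose null-homotopies $k_n$ as in Lemma~\ref{lem:surj}, and form the loops $\Lambda_n$. That lemma gives $\partial([(\Lambda_n)_n]) = [[\psi]]$, and injectivity of $\partial$ then shows that the class of $([\Lambda_n])_n$ in $\llo[C_n,SD]$ is $\partial^{-1}[[\psi]]$.

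In particular, that class does not depend on the choices of strong homotopy morphism representing $\psi$ or of the null-homotopies $k_n$. This independence comes for free from injectivity and needs no separate verification. I expect no real obstacle here. The only care needed is to observe that well-definedness of the inverse formula follows from injectivity rather than from a direct comparison of different choices of $k_n$. Such a direct comparison would also work, since changing $k_n$ by a loop $\ell_n$ alters $\Lambda_n$ by $\ell_n - \gamma_n^*\ell_{n+1}$, a coboundary for $1-\sh$.
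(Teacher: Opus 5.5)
Your proposal is correct and follows essentially the same route as the paper: you cite Proposition~\ref{prop:ELlim} for $\Theta$ and Lemmas~\ref{lem:shift}, \ref{lem:surj}, and \ref{lem:inj} for $\partial$ being an isomorphism onto $\cz$. Like the paper, you also get the choice-independence of the inverse formula via $([\Lambda_n])_n$ for free from injectivity. Your extra remark on directly comparing choices of $k_n$ (which changes $\Lambda_n$ by a $(1-\sh)$-coboundary) is a correct optional check that the paper does not include.
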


\begin{proof}
	Surjectivity of $\Theta$ and $\ker\Theta = \cz$ follow from Proposition~\ref{prop:ELlim}.
	By Lemma~\ref{lem:shift} $\partial$ is a well-defined homomorphism into $\cz$, surjective by Lemma~\ref{lem:surj} and injective by Lemma~\ref{lem:inj}.
	Hence it is an isomorphism onto $\cz$, with inverse as stated (the class of $([\Lambda_n])_n$ is independent of all the choices made in its construction, $\partial$ being injective).
	Thus $\operatorname{im}\partial = \cz = \ker\Theta$ with $\partial$ injective and $\Theta$ surjective, i.e., the sequence of Theorem~\ref{thm:general} is exact.
\end{proof}

\begin{proof}[Proof of Theorems~\ref{thm:general} and \ref{thm:main}]
	Theorem~\ref{thm:general} combines Lemmas~\ref{lem:partial-hom}--\ref{lem:inj} and
	Proposition~\ref{prop:exact}.
	Theorem~\ref{thm:main} is the case $C = SA \otimes \K$, $D = SB \otimes \K$ ($SD = S^2B \otimes \K$, $[[C,D]] = E(A,B)$), with Proposition~\ref{prop:ELlim}.
\end{proof}

\section{Naturality and comparisons}\label{sec:examples}

In this final section $A$ and $B$ are separable $C^*$-algebras and $(C_n,\gamma_n)$ is a shape system for $SA \otimes \K$.
We record the naturality of the Milnor sequence, a Hausdorffness criterion and the resulting dichotomy, and the comparisons with the sequences of Willett--Yu and Schochet.

\begin{proposition}\label{prop:natural}
	The outer terms of the sequence of Theorem~\ref{thm:main} do not depend on the chosen shape system, and the sequence is natural in $A$ and $B$.
\end{proposition}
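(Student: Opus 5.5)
Two things need proof: the outer terms are independent of the shape system, and the sequence is natural in $A$ and $B$. The middle term $E(A,B)$ and the subgroup $\cz$ are intrinsic, so independence reduces to comparing the $\llim$ and $\llo$ terms computed from two shape systems. The plan is to use interleaving, Lemma~\ref{lem:interleave}, as in Lemma~\ref{lem:compare-towers}.

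\textbf{Independence.} Let $(C_n,\gamma_n)$ and $(C'_n,\gamma'_n)$ be two shape systems for $C = SA \otimes \K$.
\begin{enumerate}
\item Semiprojectivity of the connecting maps yields, after passing to subsequences, homomorphisms $\alpha_n \colon C_n \to C'_{n}$ and $\beta_n \colon C'_{n} \to C_{n+1}$. These satisfy $\beta_n\alpha_n \simeq \gamma_n$ and $\alpha_{n+1}\beta_n \simeq \gamma'_n$ up to homotopy. This is Blackadar's uniqueness of shape systems up to shape equivalence \cite{Blackadar85}: lift $\gamma_{\infty,n}$ through the limit of the other system, and use that homotopic lifts become homotopic after one more connecting map.
\item Passing to cofinal subsequences changes neither $\llim$ nor $\llo$ \cite[Proposition~3.3]{Schochet03}. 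Since homotopic maps induce equal maps on $[\,\cdot\,,D]$, the maps $\alpha_n^*$ and $\beta_n^*$ then satisfy the hypotheses of Lemma~\ref{lem:interleave} for $D = SB\otimes\K$ and for $D = S^2B\otimes\K$.
\item Lemma~\ref{lem:interleave} gives isomorphisms of both outer terms.
\item To see that these isomorphisms are compatible with $\Theta$ and $\partial$, note that $\Theta$ is defined through $\llim \mathrm{H}(C_n,D)$ via restriction along $\gamma_{\infty,n}$, and restrictions are compatible with the $\alpha_n$ up to homotopy.
\item For $\partial$, there are two options. One is to avoid a direct check: $\partial$ is an isomorphism onto the intrinsic subgroup $\cz$, with the explicit inverse of Proposition~\ref{prop:exact}, and one can trace the $\Lambda_n$ construction through $\alpha^*$. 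The other is to compare $\partial_0$ directly. This compatibility of $\partial$ is the step I expect to be the main obstacle.
\end{enumerate}

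For the direct comparison of $\partial_0$, I would pull the strong homotopy morphism $(\underline 0,\underline\eta')$ on $(C'_n)$ back along the $\alpha_n$. The result is a strong homotopy morphism on $(C_n)$ whose homotopies are $\eta'_n\alpha_n$, corrected by the homotopies $\alpha_{n+1}\gamma_n\simeq\gamma'_n\alpha_n$. The correction terms conjugate loops at the constant path $0$, so they cancel up to homotopy. Its homotopy limit then agrees with $\hlim(\underline0,\underline{\eta'})$ composed with the identity of $C$, by Theorem~\ref{thm:dad}\eqref{dad:bij}.

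\textbf{Naturality in $B$.} A $^*$-homomorphism $B\to B'$ acts by post-composition on every term. It commutes with $\hlim$ levelwise by Equation~\eqref{eq:telescope}, hence with $\partial_0$, and with restriction, hence with $\Theta$. The case of a general morphism in $E$-theory is handled by the next step.

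\textbf{Naturality in $A$ and in $E$-classes.} Given $x\in E(A',A)$, i.e.\ an asymptotic morphism $SA'\otimes\K \asto SA\otimes\K$, choose shape systems $(C'_n)$ and $(C_n)$. By Theorem~\ref{thm:cs}\eqref{cs:factor} and semiprojectivity, after reindexing, the composites $C'_n \to C$ lift to $^*$-homomorphisms $f_n\colon C'_n\to C_{m(n)}$. These are compatible with the connecting maps up to homotopy after one further connecting map. The same interleaving bookkeeping then gives maps of towers, and $\partial$ and $\Theta$ commute with them, as in the independence argument.

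Alternatively, I would reduce all of this to one observation. The outer groups are $\cz$ and $E(A,B)/\cz$. Composition in $E$-theory is continuous by \cite{Carrion-Schafhauser23}, so it preserves $\cz$ and descends to $EL$. The identifications of Proposition~\ref{prop:ELlim} and Proposition~\ref{prop:exact} then transport this naturality.
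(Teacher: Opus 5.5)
Your naturality argument has the same architecture as the paper's: post-composition for $B$, and for $A$ a strong map of systems whose pullbacks give maps of towers, with the $\partial$-square handled by pulling back the loop data. For independence you take a heavier route. Your interleaving via Blackadar's shape equivalence and Lemma~\ref{lem:interleave} is valid. The paper, however, simply observes that for \emph{every} shape system, Proposition~\ref{prop:ELlim} identifies the $\llim$ term with $EL(A,B)$ and Theorem~\ref{thm:main} identifies the $\llo$ term with $\cz$; this is the observation in your last paragraph. So independence of the outer terms does not need the compatibility with $\partial$ that you flag as the main obstacle.

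Where that compatibility is needed, in the $\partial$-square for naturality in $A$, your justification fails as stated. Theorem~\ref{thm:dad}\eqref{dad:bij} concerns strong homotopy morphisms out of one fixed system. It says nothing about how $\hlim(\underline 0,\underline{\eta'})$, built on $(C'_n)$, behaves after precomposition with a map represented by a strong map of systems into $(C'_n)$. The maps $\gamma'_{\infty,n}\alpha_n$ and $\gamma_{\infty,n}$ agree only up to homotopy, as do $\alpha_m\gamma_{m,n}$ and $\gamma'_{m,n}\alpha_n$. Hence the telescope formulas~\eqref{eq:telescope} cannot be compared directly. Moreover, Theorem~\ref{thm:dad}\eqref{dad:rep} pins down only the maps $\phi_n$ of a representing strong homotopy morphism, not the loop data that carries the $\llo$ information.

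The missing ingredient is Dadarlat's functoriality of the homotopy limit under composition of strong maps \cite[Theorem~2.4]{Dadarlat94}, which is what the paper cites. With it, your observation that the correction homotopies are composed with $0$ (hence constant) does identify the pulled-back data, up to concatenation over the stages skipped by reindexing.

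Two smaller points:
\begin{enumerate}
\item Your extension to general $E$-classes goes beyond the statement and is not actually carried out. In the second variable, post-composing a $^*$-homomorphism $C_n\to SB\otimes\K$ with an asymptotic morphism does not land in $[C_n,SB'\otimes\K]$ without dropping a stage, and your ``next step'' treats only the first variable.
\item Your ``transport'' alternative only gives naturality for the maps on the outer terms defined by conjugating with $\partial$ and $\Theta$. It does not cover the maps induced on the towers, which is what the paper verifies.
\end{enumerate}
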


\begin{proof}
	Theorem~\ref{thm:main} identifies both outer terms with objects intrinsic to the pair $(A,B)$: for every shape system of $SA \otimes \K$, Proposition~\ref{prop:ELlim} identifies $\llim[C_n,SB \otimes \K]$ with $EL(A,B)$, and the theorem identifies $\llo[C_n,S^2B \otimes \K]$ with $\cz$.
	The sequence is natural in $B$ by post-composition.
	For naturality in $A$, let $A'$ be a separable $C^*$-algebra and $(C_n',\gamma_n')$ a shape system for $SA' \otimes \K$.
	A $^*$-homomorphism $f \colon A \to A'$ induces $Sf \otimes \id_\K \colon SA \otimes \K \to SA' \otimes \K$, represented by a strong map of systems $(C_n,\gamma_n) \to (C_n',\gamma_n')$, the system-to-system notion underlying the strong homotopy morphisms of Section~\ref{sec:prelim} \cite[Definition~1.5 and Theorem~3.5]{Dadarlat94}.
	Indeed, after reindexing, it consists of $^*$-homomorphisms $\xi_n \colon C_n \to C_{m(n)}'$ with homotopies filling the squares $\xi_{n+1} \circ \gamma_n \simeq \gamma_{m(n+1),m(n)}' \circ \xi_n$, compatibly with $Sf \otimes \id_\K$ under the canonical maps.
	On homotopy classes the filled squares commute, so the maps $\xi_n^*$ form a morphism from the reindexed towers over $(C_n')$ to the towers over $(C_n)$ and induce maps on $\llim$ and $\llo$, reindexing being cofinal.
	The square with $\Theta$ commutes by the same compatibility, and the square with $\partial$ commutes because pre-composing $\hlim(\underline 0,\underline\eta)$ with $Sf \otimes \id_\K$ is again a homotopy limit, computed from the pulled-back loop data by functoriality of the homotopy limit under composition of strong maps \cite[Theorem~2.4]{Dadarlat94}.
	On $\llo$, the reindexing replaces each loop by the concatenation of the pulled-back loops over the skipped stages, which is the isomorphism induced by passage to a cofinal subsequence.
\end{proof}

\begin{corollary}\label{cor:phantom}
	Let $A$ and $B$ be separable $C^*$-algebras and let $(C_n,\gamma_n)$ be a shape system for $SA \otimes \K$, with canonical maps $\gamma_{\infty,n}$.
	A class $x \in E(A,B)$ lies in $\cz$ if and only if $\gamma_{\infty,n}^*(x) = 0$ in $\mathrm{H}(C_n,SB \otimes \K)$ for every $n$: the phantom classes are exactly the classes that vanish on every stage of a shape system.
\end{corollary}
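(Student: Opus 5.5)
The statement is essentially an unwinding of $\ker\Theta = \cz$ from Proposition~\ref{prop:ELlim}. We specialize to $C = SA \otimes \K$ and $D = SB \otimes \K$, so that $[[C,D]] = E(A,B)$. The plan is to trace through the definition of $\Theta$ and check that its vanishing is equivalent to the vanishing of every restriction $\gamma_{\infty,n}^*(x)$ in $\mathrm{H}(C_n,D)$.

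By construction, $\Theta$ is the composite of three maps:
\begin{itemize}
\item the quotient map $E(A,B) \to E(A,B)/\cz = [[C,D]]_{\Hd}$;
\item the homeomorphism $[[C,D]]_{\Hd} \cong \llim \mathrm{H}(C_n,D)$ of Theorem~\ref{thm:cs}\eqref{cs:haus}, induced by the maps $\gamma_{\infty,n}^*$;
\item the isomorphism $\llim\mathrm{H}(C_n,D) \cong \llim[C_n,D]$ of Lemma~\ref{lem:compare-towers}.
\end{itemize}
The last map is an isomorphism, so $\Theta(x)=0$ if and only if the image of $x$ in $\llim\mathrm{H}(C_n,D)$ vanishes. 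That image is the thread $(\gamma_{\infty,n}^*(x))_n$, computed via representatives exactly as in Theorem~\ref{thm:dad}\eqref{dad:rep}. Its vanishing as an element of the inverse limit, which sits inside $\prod_n \mathrm{H}(C_n,D)$, means precisely that each coordinate vanishes. Combined with $\ker\Theta = \cz$, this gives the claimed equivalence.

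The one point needing care is that the map $[[C,D]]_{\Hd}\to\llim\mathrm{H}(C_n,D)$ is literally given coordinatewise by $\gamma_{\infty,n}^*$. This holds because the theorem states that this map is induced by the $\gamma_{\infty,n}^*$. It is additive, since pre-composition respects orthogonal sums, and its injectivity is part of its being a homeomorphism. Neither Lemma~\ref{lem:surj} nor Lemma~\ref{lem:inj} is needed.

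The phrase ``vanish on every stage'' can also be read with genuine homotopy classes in $[C_n,D]$ in place of $\mathrm{H}(C_n,D)$. Under that reading the equivalence still holds, by the argument at the start of Lemma~\ref{lem:surj}. If $\gamma_{\infty,n+1}^*x = 0$ in $\mathrm{H}(C_{n+1},D)$, then Theorem~\ref{thm:cs}\eqref{cs:factor} applied to the semiprojective $\gamma_n$ shows that $\gamma_{\infty,n}^*x$ is null-homotopic through $^*$-homomorphisms. I do not expect any step to be a genuine obstacle: the only subtlety is keeping the two notions of stagewise vanishing distinct, and they agree for classes restricted along a semiprojective connecting map.
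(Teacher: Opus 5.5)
Your proof is correct and is the paper's argument. By Proposition~\ref{prop:ELlim} you have $\ker\Theta=\cz$, and $\Theta$ is the thread $(\gamma_{\infty,n}^*x)_n$ in $\llim\mathrm{H}(C_n,SB\otimes\K)$ followed by an isomorphism, so it vanishes exactly when every coordinate does.

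Your closing remark about genuine homotopy classes goes beyond what the statement needs. It holds only for the canonical lifts $r_n(\gamma_{\infty,n+1}^*x)=[\phi_{n+1}\circ\gamma_n]$ of Lemma~\ref{lem:compare-towers}. An arbitrary $^*$-homomorphism representing $\gamma_{\infty,n}^*x$ need not be null-homotopic, since $C_n$ itself is not assumed semiprojective.
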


\begin{proof}
	By Proposition~\ref{prop:ELlim}, $\ker\Theta = \cz$, and the components of $\Theta$ are induced by the maps $\gamma_{\infty,n}^*$.
\end{proof}

For the next two results, $B$ being separable, we may assume without loss of generality that the towers at hand consist of countable groups.
Indeed, by Theorem~\ref{thm:cs}\eqref{cs:discrete} each structure map of the tower $(\mathrm{H}(C_n,S^2B \otimes \K))_n$ factors through a countable set, so the groups $\Gamma_n \coloneqq \operatorname{im}\gamma_n^* \subseteq \mathrm{H}(C_n,S^2B \otimes \K)$ are countable.
The inclusions $\Gamma_n \to \mathrm{H}(C_n,S^2B \otimes \K)$ and the corestrictions $\mathrm{H}(C_{n+1},S^2B \otimes \K) \to \Gamma_n$ of $\gamma_n^*$ satisfy the hypotheses of Lemma~\ref{lem:interleave} (there, take $G_n = \Gamma_n$ and $K_n = \mathrm{H}(C_n,S^2B \otimes \K)$, with the inclusions as the maps $q_n$ and the corestrictions as the maps $r_n$), so together with Lemma~\ref{lem:compare-towers} we obtain
\begin{equation}\label{eq:imtower}
	\llo [C_n,S^2B \otimes \K] \cong \llo \Gamma_n,
\end{equation}
the $\llo$ of a tower of countable abelian groups.

Recall that a tower $(G_n,g_n)$ satisfies the \emph{Mittag--Leffler condition} if for each $n$ the descending chain of subgroups $\operatorname{im}(G_m \to G_n) \subseteq G_n$, $m \ge n$, is eventually constant: there is $m_0 \ge n$ with $\operatorname{im}(G_m \to G_n) = \operatorname{im}(G_{m_0} \to G_n)$ for all $m \ge m_0$.

\begin{corollary}\label{cor:hausdorff}
	Let $A$ and $B$ be separable $C^*$-algebras and let $(C_n,\gamma_n)$ be a shape system for $SA \otimes \K$.
	Then $E(A,B)$ is Hausdorff, that is $E(A,B) = EL(A,B)$, if and only if $\llo[C_n,S^2B \otimes \K] = 0$, if and only if the tower $(\Gamma_n)_n$ of Equation~\eqref{eq:imtower} is Mittag--Leffler.
	In particular $E(A,B)$ is Hausdorff whenever the tower $([C_n,S^2B \otimes \K])_n$ is Mittag--Leffler.
\end{corollary}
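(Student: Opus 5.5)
The plan is to chain three equivalences, each resting on results already established. First, $E(A,B)$ is Hausdorff if and only if $\cz = 0$. This holds by Lemma~\ref{lem:top-group}: a topological group is Hausdorff exactly when its closure of zero is trivial. Equivalently, the quotient map $E(A,B) \to EL(A,B) = E(A,B)/\cz$ is an isomorphism. By Theorem~\ref{thm:main}, $\partial$ is an isomorphism $\llo[C_n,S^2B\otimes\K] \cong \cz$. So Hausdorffness is equivalent to the vanishing of $\llo[C_n,S^2B \otimes \K]$, which gives the first ``if and only if''.

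For the second equivalence, we use Equation~\eqref{eq:imtower}, which gives $\llo[C_n,S^2B\otimes\K] \cong \llo\Gamma_n$ with $(\Gamma_n)$ a tower of countable abelian groups. We then invoke the classical theorem, due to Gray and Emmanouil and recorded e.g.\ in \cite{Schochet03}: for a tower of countable abelian groups, $\llo$ vanishes if and only if the tower is Mittag--Leffler. The direction Mittag--Leffler $\Rightarrow \llo = 0$ holds for arbitrary towers. The converse is where countability is essential, since without it the converse fails. This reduction to countable groups is the only delicate point, and it is already handled in the discussion before the corollary, using Theorem~\ref{thm:cs}\eqref{cs:discrete}. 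So the main obstacle reduces to citing the correct form of the countable-tower theorem.

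For the final assertion, suppose $([C_n,S^2B\otimes\K])_n$ is Mittag--Leffler. The general implication Mittag--Leffler $\Rightarrow \llo = 0$ gives $\llo[C_n,S^2B\otimes\K]=0$. The first equivalence then yields Hausdorffness, with no appeal to countability or to $\Gamma_n$. Alternatively, one could check that the Mittag--Leffler property transfers to $(\Gamma_n)$ via the interleaving of Lemma~\ref{lem:compare-towers} and the inclusions $\Gamma_n \subseteq \mathrm{H}(C_n,S^2B\otimes\K)$. The direct route is simpler, so I would use it.
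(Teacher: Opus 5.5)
Your proposal is correct and follows the paper's proof essentially verbatim. Hausdorffness is $\cz = 0$, which Theorem~\ref{thm:main} identifies with the vanishing of $\llo[C_n,S^2B\otimes\K]$; Equation~\eqref{eq:imtower} together with the countable-tower criterion (cited in the paper as \cite[Theorem~3.11]{Schochet03}) gives the Mittag--Leffler equivalence; and the final assertion uses only that Mittag--Leffler towers have vanishing $\llo$, which is exactly the direct route you chose.
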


\begin{proof}
	The first equivalence is Theorem~\ref{thm:main}, $E(A,B)$ being Hausdorff exactly when $\cz = 0$.
	A Mittag--Leffler tower of abelian groups has vanishing $\llo$, and for towers of countable abelian groups the converse holds as well \cite[Theorem~3.11]{Schochet03}.
	By Equation~\eqref{eq:imtower} this gives the second equivalence.
	For the final statement: if the tower $([C_n,S^2B \otimes \K])_n$ is Mittag--Leffler, then its $\llo$ vanishes, and by Theorem~\ref{thm:main} so does $\cz$.
\end{proof}

\begin{remark}
	\label{rem:dichotomy}
	The $\llo$ of a tower of countable abelian groups is either trivial or uncountable, by an argument of Gray \cite[p.~242]{Gray66} (see also \cite[Proposition~3.5]{Schochet03}).
	By Equation~\eqref{eq:imtower}, $\cz$ is therefore either trivial or uncountable: if $E(A,B)$ is not Hausdorff, it contains uncountably many phantom classes.
	This is the $E$-theoretic counterpart of \cite[Corollary~7.9]{Willett-Yu21}, where the observation is credited to Schochet.
\end{remark}

\begin{proposition}[Comparison with the Willett--Yu sequence]\label{prop:compare}
	Let $A$ and $B$ be separable $C^*$-algebras with $A$ nuclear, and let $(C_n,\gamma_n)$ be a shape system for $SA \otimes \K$.
	The isomorphism $E(A,B) \cong KK(A,B)$, an isomorphism of topological groups for nuclear $A$, carries $\cz$ onto $\overline{\{0\}}_{KK(A,B)}$ and induces $EL(A,B) \cong KL(A,B)$, so Theorem~\ref{thm:main} becomes the sequence
	\begin{equation}\label{eq:KLext}
		0 \longrightarrow \overline{\{0\}}_{KK(A,B)} \longrightarrow KK(A,B) \longrightarrow KL(A,B) \longrightarrow 0 .
	\end{equation}
	The Willett--Yu controlled-$KK$ Milnor sequence is isomorphic to it as an extension of $KL(A,B)$ by $\overline{\{0\}}_{KK(A,B)}$; in particular
	\[
		\llo [C_n,S^2B \otimes \K] \cong \overline{\{0\}}_{KK(A,B)} \cong \llo KK_{\varepsilon_n}(X_n,SB),
	\]
	both being the kernel of $KK(A,B) \to KL(A,B)$.
	Here $(X_n, \varepsilon_n)$ is a cofinal sequence of finite subsets $X_n \subseteq A$ and tolerances $\varepsilon_n$ for the controlled $KK$-groups of \cite[Sections~6 and~7]{Willett-Yu21}.
\end{proposition}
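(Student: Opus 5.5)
The plan is to let the topology carry everything, so that the comparison with Willett--Yu never requires matching the two $\llo$ towers directly. Both sequences are extensions of the same quotient $KL(A,B)$ by the same kernel $\overline{\{0\}}_{KK(A,B)}$. It therefore suffices to know two things: the isomorphism $E(A,B)\cong KK(A,B)$ is a homeomorphism, and each sequence is, up to isomorphism of its outer terms, the canonical sequence \eqref{eq:KLext}.

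\textbf{Step 1: the $E$-side.} For nuclear $A$, the Connes--Higson map $KK(A,B)\to E(A,B)$ is a group isomorphism \cite[Corollary~9]{Connes-Higson90}. The topology on $E(A,B)$ agrees with Dadarlat's topology on $KK(A,B)$ under this map \cite[Section~4.2]{Carrion-Schafhauser23}. A concrete way to see this is that both topologies are characterized on sequences by Pimsner's condition (Theorem~\ref{thm:cs}\eqref{cs:pimsner} and its $KK$ counterpart in \cite{Dadarlat05}). The isomorphism is natural in the second variable, so it applies to $B$ and to $C(\Ndag,B)$ compatibly with the evaluations; hence it preserves convergent sequences, and both groups are first countable. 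A homeomorphic group isomorphism carries the closure of zero onto the closure of zero and so induces $EL(A,B)\cong KL(A,B)$. Transporting Theorem~\ref{thm:main} through it, using $\ker\Theta=\cz$ and $\partial$ an isomorphism onto $\cz$, yields exactly \eqref{eq:KLext}, with $\llo[C_n,S^2B\otimes\K]\cong\overline{\{0\}}_{KK(A,B)}$.

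\textbf{Step 2: the Willett--Yu side.} I would quote from \cite[Sections~6 and~7]{Willett-Yu21} their Milnor sequence
\[
0\to\llo KK_{\varepsilon_n}(X_n,SB)\to KK(A,B)\to\llim KK_{\varepsilon_n}(X_n,B)\to0,
\]
together with their identification of the image of the $\llo$ term as $\overline{\{0\}}_{KK(A,B)}$. Their identification of the right-hand term with $KL(A,B)$ must be compatible with the quotient map $KK(A,B)\to KL(A,B)$. An extension $0\to N\to G\to G/N\to0$ is determined up to isomorphism by the subgroup $N$, so the two sequences are then isomorphic as extensions via the identity on $KK(A,B)$. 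The chain of isomorphisms in the statement follows, each $\llo$ term being identified with the kernel of $KK(A,B)\to KL(A,B)$.

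\textbf{Main obstacle.} The hard part is making sure that the cited Willett--Yu result genuinely identifies the image of their $\varprojlim^1$ term with Dadarlat's closure of zero, and not with some a priori different subgroup such as the kernel of the map to their $\varprojlim$. If this is only implicit in \cite{Willett-Yu21}, I would verify it directly. Their map $KK(A,B)\to\llim KK_{\varepsilon_n}(X_n,B)$ has kernel consisting of the classes whose controlled images vanish at every stage. I would then show that this kernel equals the set of classes approximable, in Dadarlat's sense, by $0$; that set is the closure of zero, by Dadarlat's characterization of $KK$ modulo it through finite-set/tolerance invariants \cite{Dadarlat05}. Once both kernels are known to be $\overline{\{0\}}_{KK(A,B)}$, the comparison is formal.
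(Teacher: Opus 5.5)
Your proposal is correct and follows the paper's proof. You transport Theorem~\ref{thm:main} through the Connes--Higson isomorphism, which is a homeomorphism by \cite[Section~4.2]{Carrion-Schafhauser23}; your Pimsner-condition argument with first countability is a valid way to re-derive this. You then invoke \cite[Theorems~6.14 and~7.8]{Willett-Yu21} for $\llim KK_{\varepsilon_n}(X_n,B)\cong KL(A,B)$ and $\llo KK_{\varepsilon_n}(X_n,SB)\cong\overline{\{0\}}_{KK(A,B)}$, compatibly with the quotient map, so both sequences are the canonical extension with the same kernel.

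Your ``main obstacle'' is smaller than you fear. By exactness of their sequence, the image of the $\varprojlim^1$ term is automatically the kernel of the map to $\varprojlim$. The only point to check is that this kernel equals the closure of zero, and that is exactly what the cited theorems provide.
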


\begin{proof}
	For $A$ nuclear the canonical natural transformation $KK(A,B) \to E(A,B)$ is an isomorphism \cite[Corollary~9]{Connes-Higson90}.
	It carries $\overline{\{0\}}_{KK(A,B)}$ onto $\cz$ and induces $KL(A,B) \cong EL(A,B)$ \cite[Section~4.2]{Carrion-Schafhauser23}.
	Under these identifications the sequence of Theorem~\ref{thm:main} becomes the extension~\eqref{eq:KLext}, with $\llo [C_n,S^2B \otimes \K] \cong \overline{\{0\}}_{KK(A,B)}$, $\llim [C_n,SB \otimes \K] \cong KL(A,B)$, and $\Theta$ the canonical surjection $KK(A,B) \to KL(A,B)$.
	Willett--Yu prove the same two identifications for their controlled tower ($\llim KK_{\varepsilon_n}(X_n,B) \cong KL(A,B)$ and $\llo KK_{\varepsilon_n}(X_n,SB) \cong \overline{\{0\}}_{KK(A,B)}$), compatibly with the same surjection \cite[Theorems~6.14 and~7.8]{Willett-Yu21}.
	Then the two Milnor sequences are isomorphic as extensions and their $\varprojlim^1$ terms agree.
\end{proof}

The Willett--Yu sequence, like ours, needs no nuclearity: their identifications hold for arbitrary separable $A$ and $B$ \cite[Theorems~6.14 and~7.8]{Willett-Yu21}.
The hypothesis in Proposition~\ref{prop:compare} serves only to identify $E(A,B)$ with $KK(A,B)$ (and for this much less is needed, see e.g.\ \cite[Theorem~25.6.3]{Blackadar98}).
In general, the two sequences describe the closures of zero of potentially different groups.

\begin{proposition}[The UCT case]\label{prop:uct}
	Let $A$ and $B$ be separable $C^*$-algebras, suppose $A$ satisfies the UCT, and let $(C_n,\gamma_n)$ be a shape system for $SA \otimes \K$.
	Then
	\[
		\llo [C_n,S^2B \otimes \K] \cong \mathrm{Pext}^1_{\mathbb Z}\big(K_{*+1}(A),K_*(B)\big)
	\]
	and $\llim[C_n,SB \otimes \K] \cong \operatorname{Hom}_\Lambda(\underline K(A),\underline K(B))$.
\end{proposition}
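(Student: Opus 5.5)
By Theorem~\ref{thm:main} and Proposition~\ref{prop:ELlim}, the two groups in the statement are $\cz \subseteq E(A,B)$ and $EL(A,B) = E(A,B)/\cz$. So the task reduces to identifying these two groups when $A$ satisfies the UCT, and no further manipulation of the shape system is needed.

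\emph{Step 1: pass from $E$ to $KK$.} If $A$ satisfies the UCT, it is $KK$-equivalent to a commutative (hence nuclear) algebra $A'$. We use this, rather than nuclearity of $A$ itself, to conclude that the canonical map $KK(A,B) \to E(A,B)$ is an isomorphism: both theories are invariant under the respective equivalences, and the comparison map is an isomorphism for $A'$ by \cite[Corollary~9]{Connes-Higson90}. This yields the UCT sequence
\[
0 \to \mathrm{Ext}^1_{\mathbb Z}(K_{*+1}(A),K_*(B)) \to E(A,B) \to \Hom(K_*(A),K_*(B)) \to 0 .
\]

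\emph{Step 2: compare the topologies.} We need the isomorphism of Step~1 to be a homeomorphism, or at least to identify closures of zero. We plan to use naturality of the Milnor sequence (Proposition~\ref{prop:natural}) under the $E$-equivalence $A \sim A'$. An $E$-equivalence induces isomorphisms $E(A,B)\cong E(A',B)$ given by Kasparov/$E$-products. Proposition~\ref{prop:natural} gives naturality only for $^*$-homomorphisms, so we either check that $E$-products are continuous (Pimsner's condition makes this plausible, via \cite{Carrion-Schafhauser23}) or directly show that $\cz$ is invariant under $E$-equivalence because it is characterized as $\ker\Theta$. After this reduction $A'$ is nuclear, and Proposition~\ref{prop:compare} gives $\cz \cong \overline{\{0\}}_{KK(A',B)}$.

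\emph{Step 3: identify the pieces.} For nuclear UCT $A'$, we quote Schochet's fine structure \cite{Schochet01, Schochet02}, or Dadarlat \cite{Dadarlat05}, for the identification $\overline{\{0\}}_{KK} = \mathrm{Pext}^1_{\mathbb Z}(K_{*+1}(A'),K_*(B))$. We also quote the Dadarlat--Loring UCT for $KL$ to get $KL(A',B)\cong \Hom_\Lambda(\underline K(A'),\underline K(B))$. Since $KK$-equivalence preserves $K_*$ and $\underline K$, transporting back to $A$ gives both claims.

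We expect the main obstacle to be Step~2, namely showing that $\cz$ is an $E$-equivalence invariant in the first variable when $A$ itself is not nuclear. Our first attempt will be to prove that the product with a fixed class $z\in E(A',A)$ maps convergent sequences to convergent sequences. By Pimsner's condition, a convergent sequence is a single class $y\in E(A,C(\Ndag,B))$, and $z\cdot y \in E(A',C(\Ndag,B))$ evaluates correctly because evaluation is a $^*$-homomorphism in the second variable. This makes continuity essentially formal, so $\cz$ and $EL$ are preserved.
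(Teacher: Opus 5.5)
Your proof is correct, but it takes a different route from the paper's. The paper also starts from the fact that $A$ is $KK$-equivalent, hence $E$-equivalent, to a commutative algebra. It then simply cites \cite[Section~4.2 and Theorem~4.8]{Carrion-Schafhauser23}, where it is already shown within $E$-theory that for such $A$ the closure of zero is the image of the pure extensions under the injective $E$-theoretic UCT map, and that $EL(A,B)\cong\Hom_\Lambda(\underline K(A),\underline K(B))$. Theorem~\ref{thm:main} and Proposition~\ref{prop:ELlim} then finish the proof.

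You instead move the problem to the commutative algebra $A'$, and your Pimsner-condition argument works. Take $y\in E(A,C(\Ndag,B))$ realizing $x_m\to x$. Associativity of composition, together with the fact that each evaluation is a $^*$-homomorphism, gives $\ev_m(y\circ z)=x_m\circ z$ and $\ev_\infty(y\circ z)=x\circ z$. So right multiplication by $z$, and likewise by its inverse, is sequentially continuous, hence continuous by first countability (Theorem~\ref{thm:cs}\eqref{cs:countable}). This gives an isomorphism of topological groups $E(A,B)\cong E(A',B)$ carrying $\cz$ onto $\cz$ and $EL$ onto $EL$. From there, Proposition~\ref{prop:compare}, the $KK$-results $\overline{\{0\}}_{KK}=\mathrm{Pext}$ of \cite{Dadarlat05, Schochet01, Schochet02}, the Dadarlat--Loring theorem for $KL$, and $KK$-invariance of $K_*$ and $\underline K$ finish the proof.

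The paper's proof is shorter because it relies on the $E$-theoretic UCT work already done in \cite{Carrion-Schafhauser23}. Yours is more self-contained relative to the classical $KK$ literature, and it isolates a fact of independent interest: $\cz$ and $EL$ are invariant under $E$-equivalences in the first variable, which goes beyond the naturality for $^*$-homomorphisms in Proposition~\ref{prop:natural}.

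Two minor points. Your Step~1 identification $KK(A,B)\cong E(A,B)$ for $A$ itself is never used once Step~2 moves everything to $A'$. The alternative you mention, deriving invariance of $\cz$ from its description as $\ker\Theta$, is unnecessary, since the continuity argument already suffices.
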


\begin{proof}
	A separable $C^*$-algebra satisfies the UCT for all $B$ if and only if it is $KK$-equivalent to a commutative $C^*$-algebra, by the argument of \cite[Corollary~7.5]{Rosenberg-Schochet87} (cf.\ the discussion following Remark~7.6 there).
	The canonical natural transformation $KK \to E$ (see \cite[Corollary~9]{Connes-Higson90} or \cite[Corollary~25.5.8]{Blackadar98}) is compatible with the composition products, so it carries $KK$-equivalences to $E$-equivalences.
	Hence $A$ is $E$-equivalent to a commutative $C^*$-algebra.
	For such $A$, it is shown in \cite[Section~4.2 and Theorem~4.8]{Carrion-Schafhauser23} that $\cz$ is the image of the subgroup of pure extensions under the injective map $\mathrm{Ext}^1_{\mathbb Z}(K_{*+1}(A),K_*(B)) \to E(A,B)$ of the $E$-theoretic UCT, and that $EL(A,B) \cong \operatorname{Hom}_\Lambda(\underline K(A),\underline K(B))$.
	The statement now follows from Theorem~\ref{thm:main} and Proposition~\ref{prop:ELlim}.
\end{proof}

Besides the bootstrap class of \cite{Rosenberg-Schochet87}, the hypothesis of Proposition~\ref{prop:uct} covers non-nuclear algebras.
A free group $\mathbb{F}_n$ is a-T-menable, so its reduced group $C^*$-algebra $C^*_r(\mathbb{F}_n)$ satisfies the UCT \cite[Theorem~9.3 and Proposition~10.7]{Tu99}.
For $n \ge 2$ this algebra is not nuclear.

\subsection*{Acknowledgments}
The author is grateful to Christopher Schafhauser for many discussions about the topology on $E$-theory, and to Rufus Willett for enlightening conversations about controlled $KK$-theory and its Milnor exact sequence.

\bibliographystyle{amsalpha}
\bibliography{general-references}

\providecommand{\bysame}{\leavevmode\hbox to3em{\hrulefill}\thinspace}
\providecommand{\MR}{\relax\ifhmode\unskip\space\fi MR }
\providecommand{\MRhref}[2]{%
  \href{http://www.ams.org/mathscinet-getitem?mr=#1}{#2}
}
\providecommand{\href}[2]{#2}
\begin{thebibliography}{GHT00}

\bibitem[BK72]{Bousfield-Kan72}
Aldridge~K. Bousfield and Daniel~M. Kan, \emph{Homotopy limits, completions and
  localizations}, Lecture Notes in Mathematics, vol. 304, Springer-Verlag,
  Berlin-New York, 1972. \MR{365573}

\bibitem[Bla85]{Blackadar85}
Bruce Blackadar, \emph{Shape theory for {$C^*$}-algebras}, Math. Scand.
  \textbf{56} (1985), no.~2, 249--275. \MR{813640}

\bibitem[Bla98]{Blackadar98}
\bysame, \emph{{$K$}-theory for operator algebras}, second ed., Mathematical
  Sciences Research Institute Publications, vol.~5, Cambridge University Press,
  Cambridge, 1998. \MR{1656031}

\bibitem[Bro75]{Brown75}
Lawrence~G. Brown, \emph{Operator algebras and algebraic {$K$}-theory}, Bull.
  Amer. Math. Soc. \textbf{81} (1975), no.~6, 1119--1121. \MR{0383090}

\bibitem[Bro77]{Brown77b}
\bysame, \emph{Characterizing {${\rm Ext}(X)$}}, {$K$}-theory and operator
  algebras ({P}roc. {C}onf., {U}niv. {G}eorgia, {A}thens, {G}a., 1975),
  Springer, Berlin, 1977, pp.~10--18. Lecture Notes in Math., Vol. 575.
  \MR{0474275}

\bibitem[Bro84]{Brown84}
\bysame, \emph{The universal coefficient theorem for {${\rm Ext}$} and
  quasidiagonality}, Operator algebras and group representations, {V}ol. {I}
  ({N}eptun, 1980), Monogr. Stud. Math., vol.~17, Pitman, Boston, MA, 1984,
  pp.~60--64. \MR{731763}

\bibitem[CH90]{Connes-Higson90}
Alain Connes and Nigel Higson, \emph{D\'{e}formations, morphismes asymptotiques
  et {$K$}-th\'{e}orie bivariante}, C. R. Acad. Sci. Paris S\'{e}r. I Math.
  \textbf{311} (1990), no.~2, 101--106. \MR{1065438}

\bibitem[CS24]{Carrion-Schafhauser23}
Jos\'e~R. Carri\'on and Christopher Schafhauser, \emph{A topology on
  {$E$}-theory}, J. Lond. Math. Soc. (2) \textbf{109} (2024), no.~6, Paper No.
  e12917, 32. \MR{4751861}

\bibitem[Dad94]{Dadarlat94}
Marius Dadarlat, \emph{Shape theory and asymptotic morphisms for
  {$C^*$}-algebras}, Duke Math. J. \textbf{73} (1994), no.~3, 687--711.
  \MR{1262931}

\bibitem[Dad05]{Dadarlat05}
\bysame, \emph{On the topology of the {K}asparov groups and its applications},
  J. Funct. Anal. \textbf{228} (2005), no.~2, 394--418. \MR{2175412}

\bibitem[GHT00]{GHT00}
Erik Guentner, Nigel Higson, and Jody Trout, \emph{Equivariant {$E$}-theory for
  {$C^*$}-algebras}, Mem. Amer. Math. Soc. \textbf{148} (2000), no.~703,
  viii+86. \MR{1711324}

\bibitem[Gra66]{Gray66}
Brayton~I. Gray, \emph{Spaces of the same {$n$}-type, for all {$n$}}, Topology
  \textbf{5} (1966), 241--243. \MR{196743}

\bibitem[R\o95]{Rordam95}
Mikael R\o{}rdam, \emph{Classification of certain infinite simple
  {$C^*$}-algebras}, J. Funct. Anal. \textbf{131} (1995), no.~2, 415--458.
  \MR{1345038}

\bibitem[RS87]{Rosenberg-Schochet87}
Jonathan Rosenberg and Claude~L. Schochet, \emph{The {K}\"unneth theorem and
  the universal coefficient theorem for {K}asparov's generalized
  {$K$}-functor}, Duke Math. J. \textbf{55} (1987), no.~2, 431--474.
  \MR{894590}

\bibitem[Sch96]{Schochet96}
Claude~L. Schochet, \emph{The {UCT}, the {M}ilnor sequence, and a canonical
  decomposition of the {K}asparov groups}, $K$-Theory \textbf{10} (1996),
  no.~1, 49--72. \MR{1373818}

\bibitem[Sch98]{Schochet98}
\bysame, \emph{Correction to: ``{T}he {UCT}, the {M}ilnor sequence, and a
  canonical decomposition of the {K}asparov groups'' [{$K$}-{T}heory {\bf 10}
  (1996), no.~1, 49--72; {MR}1373818 (97d:46088)]}, $K$-Theory \textbf{14}
  (1998), no.~2, 197--199. \MR{1628275}

\bibitem[Sch01]{Schochet01}
\bysame, \emph{The fine structure of the {K}asparov groups. {I}. {C}ontinuity
  of the {$KK$}-pairing}, J. Funct. Anal. \textbf{186} (2001), no.~1, 25--61.
  \MR{1863291}

\bibitem[Sch02]{Schochet02}
\bysame, \emph{The fine structure of the {K}asparov groups. {II}.
  {T}opologizing the {UCT}}, J. Funct. Anal. \textbf{194} (2002), no.~2,
  263--287. \MR{1934604}

\bibitem[Sch03]{Schochet03}
\bysame, \emph{A {P}ext primer: pure extensions and {$\lim^1$} for infinite
  abelian groups}, New York Journal of Mathematics. NYJM Monographs, vol.~1,
  State University of New York, University at Albany, Albany, NY, 2003,
  Available electronically at \url{http://nyjm.albany.edu/m/2003/1v.pdf}.
  \MR{1993677}

\bibitem[Tu99]{Tu99}
Jean-Louis Tu, \emph{La conjecture de {B}aum-{C}onnes pour les feuilletages
  moyennables}, $K$-Theory \textbf{17} (1999), no.~3, 215--264. \MR{1703305}

\bibitem[WY24]{Willett-Yu24}
Rufus Willett and Guoliang Yu, \emph{The universal coefficient theorem for
  {$C^*$}-algebras with finite complexity}, Mem. Eur. Math. Soc., vol.~8, EMS
  Press, Berlin, 2024. \MR{4711665}

\bibitem[WY26]{Willett-Yu21}
\bysame, \emph{Controlled {$KK$}-theory and a {M}ilnor exact sequence}, Doc.
  Math. \textbf{31} (2026), no.~3, 503--581. \MR{5061886}

\end{thebibliography}

\end{document}